\newcommand{\bbold}{\mathbb}
\newcommand{\cal}{\mathcal}
\def\R { {\bbold R} }
\def\Z { {\bbold Z} }
\def\N { {\bbold N} }
\renewcommand\epsilon{\varepsilon}
\renewcommand\rho{\varrho}
\def \<{\langle}
\def \>{\rangle}
\def \hat {\widehat}
\def \supp {\operatorname{supp}}
\def \RxLE {\R [[  x^{\R}  ]] ^{\operatorname {LE}}  }
\def \((  {(\!(}
\def \)) {)\!)}
\DeclareMathSymbol{\precequ}{\mathrel}{symbols}{"16}
\DeclareMathSymbol{\succequ}{\mathrel}{symbols}{"17}
\newtheorem{theorem}{Theorem}[section]
\newtheorem{lemma}[theorem]{Lemma}
\newtheorem{prop}[theorem]{Proposition}
\newtheorem{cor}[theorem]{Corollary}
\theoremstyle{definition}
\theoremstyle{remark}
\newtheorem*{example}{Example}
\newtheorem*{examples}{Examples}
\newtheorem*{notation}{Notation}
\newtheorem*{question}{Question}
\newcommand{\abs}[1]{\lvert#1\rvert}
\def \fM {{\mathfrak M}}
\def \fd {{\mathfrak d}}
\def \fm {{\mathfrak m}}
\def \fn {{\mathfrak n}}
\def \CM {{{C}}[[ \fM ]] }
\def \RM {{{\R}}[[ \fM ]] }
\def \id{\operatorname{id}}
\let\oldi\i
\let\oldj\j
\renewcommand\i{\relax\ifmmode{\boldsymbol{i}}\else\oldi\fi}
\renewcommand\j{\relax\ifmmode{\boldsymbol{j}}\else\oldj\fi}
\renewcommand\leq{\leqslant}
\renewcommand\geq{\geqslant}
\renewcommand\preceq{\preccurlyeq}
\renewcommand\frak{\mathfrak}
\DeclareMathAlphabet{\mathbf}{OML}{cmm}{b}{it}
\DeclareFontFamily{U}{fsy}{}
\DeclareFontShape{U}{fsy}{m}{n}{<->s*[.9]psyr}{}
\DeclareSymbolFont{der@m}{U}{fsy}{m}{n}
\DeclareMathSymbol{\der}{\mathord}{der@m}{182}
\DeclareSymbolFont{der@m}{U}{fsy}{m}{n}
\DeclareMathSymbol{\derdelta}{\mathord}{der@m}{100}
\DeclareSymbolFont{imag@m}{OT1}{cmr}{m}{ui}
\DeclareMathSymbol{\imag}{\mathord}{imag@m}{105}
\DeclareFontFamily{OMS}{smallo}{}
\DeclareFontShape{OMS}{smallo}{m}{n}{<->s*[.7]cmsy10}{}
\DeclareSymbolFont{smallo@m}{OMS}{smallo}{m}{n}
\DeclareMathSymbol{\smallo}{\mathord}{smallo@m}{79}
\DeclareFontFamily{OMS}{largerdot}{}
\DeclareFontShape{OMS}{largerdot}{m}{n}{<->s*[.8]cmsy10}{}
\DeclareSymbolFont{largerdot@m}{OMS}{largerdot}{m}{n}
\DeclareMathSymbol{\largerdot}{\mathord}{largerdot@m}{15}
\DeclareMathSymbol{\llambda}{\mathord}{der@m}{108}
\DeclareMathSymbol{\rrho}{\mathord}{der@m}{114}
\newcommand\st{\operatorname{st}}
\newcommand\ch{\operatorname{ch}}
\DeclareMathAlphabet{\mathpzc}{OT1}{pzcm}{mb}{it}
\begin{document}

\footnotetext{May 2012}

\title[Transseries and asymptotic fields]{Transseries and Todorov-Vernaeve's asymptotic fields}

\author{Matthias Aschenbrenner}
\email{matthias@math.ucla.edu}
\address{Department of Mathematics \\
University of California, Los Angeles \\ 
Box 951555 \\
Los Angeles, CA 90095-1555, U.S.A.}

\author{Isaac Goldbring}
\email{isaac@math.ucla.edu}
\address{Department of Mathematics \\
University of California, Los Angeles \\ 
Box 951555 \\
Los Angeles, CA 90095-1555, U.S.A.}

\begin{abstract}
We study the relationship between fields of transseries 
and residue fields of convex subrings of non-standard extensions of the real numbers. This was motivated by  a question of Todorov and Vernaeve, answered in this paper.
\end{abstract}

\maketitle

\noindent
In this note we answer a question by Todorov and Vernaeve (see, e.g., \cite{TV09}) concerning the relationship between the field of logarithmic-exponential series from~\cite{DMM01} and the residue field of a certain convex subring of a non-standard extension of the real numbers, introduced in \cite{TV12} in connection with a non-standard approach to Colombeau's theory of generalized functions. 
The answer to this question can almost immediately be deduced from well-known (but non-trivial) results about o-minimal structures. It should therefore certainly be familiar to logicians working in this area, but perhaps less so to those in non-standard analysis, and hence may be worth recording.

\medskip
\noindent
We begin by explaining the question of Todorov-Vernaeve.
Let ${}^*\R$ be a non-standard extension of $\mathbb R$.
Given $X\subseteq\R^m$ we denote the non-standard extension of $X$ by ${}^*X$, and given also a map $f\colon X\to\R^n$, by abuse of notation we  denote the non-standard extension of $f$ to a map ${}^* X\to {}^*\R^n$ by the same symbol~$f$.

\medskip
\noindent
Let $\mathcal O$ be a convex subring of ${}^*\R$. Then $\mathcal O$ is a valuation ring of ${}^*\R$, with maximal ideal
$$\mathcal o := \{ x\in {}^*\R: \text{$x=0$, or $x\neq 0$ and $x^{-1}\notin\mathcal O$} \}.$$
We denote the residue field $\mathcal O/\mathcal o$ of $\mathcal O$ by $\widehat{\mathcal O}$, with natural surjective morphism 
$$x\mapsto\widehat{x}:=x+\mathcal o\colon\mathcal O\to\widehat{\mathcal O}.$$ 
The ordering of ${}^*\R$ induces an ordering of $\widehat{\mathcal O}$ making $\widehat{\mathcal O}$ an ordered field and $x\mapsto\widehat{x}$ order-preserving. By standard facts from real algebra \cite{KW}, $\widehat{\mathcal O}$ is real closed.
Residue fields of convex subrings of ${}^*\R$ are called ``asymptotic fields'' in \cite{TV12} (although this terminology is already used with a different meaning elsewhere~\cite{ADH11}).
The collection of convex subrings of ${}^*\R$ is linearly ordered by inclusion, and the
smallest convex subring of ${}^*\R$ is  
$${}^*\R_{\operatorname{fin}} = \{ x\in {}^*\R: \text{$\abs{x}\leq n$ for some $n$}\},$$
with maximal ideal
$${}^*\R_{\operatorname{inf}} = \left\{ x\in {}^*\R: \text{$\abs{x}\leq \textstyle\frac{1}{n}$ for all $n>0$}\right\}.$$
The inclusions $\R\to{}^*\R_{\operatorname{fin}}\to\mathcal O$ give rise to a field embedding $\R\to\widehat{\mathcal O}$, by which we identify~$\R$ with a subfield of~$\widehat{\mathcal O}$. 
In the case $\mathcal O={}^*\R_{\operatorname{fin}}$ we have $\widehat{\mathcal O}=\R$, and $\widehat{x}$ is the standard part of $x\in{}^*\R_{\operatorname{fin}}$, also denoted in the following by $\st(x)$.

\medskip
\noindent
Let now  $\xi\in{}^*\R$ with $\xi>\R$ and let $\mathcal E$ be the smallest convex subring of ${}^*\R$ containing all iterated exponentials
$\xi, \exp \xi, \exp\exp\xi,\dots$ of $\xi$,  that is,
$$\mathcal E = \big\{x\in {}^*\R: \text{$\abs{x} \leq \exp_n(\xi)$ for some $n$} \big\},$$
where $\exp_0(\xi)=\xi$ and $\exp_n(\xi)=\exp(\exp_{n-1}(\xi))$ for $n>0$.
The  maximal ideal of~$\mathcal E$ is
$$\mathcal e = \left\{x\in {}^*\R: \text{$\abs{x} \leq \textstyle\frac{1}{\exp_n(\xi)}$ for all $n$} \right\},$$
with residue field $\widehat{\mathcal E}=\mathcal E/\mathcal e$.
Note that the definition of $\widehat{\mathcal E}$ depends on the choice of ${}^*\R$ and~$\xi$, which is suppressed in our notation; in \cite{TV12,TV09}, $\widehat{\mathcal E}$ is denoted by $\widehat{\mathcal E_\rho}$ where $\rho=1/\xi$.

\medskip
\noindent
By an {\bf exponential field} we mean an ordered field $K$ equipped with an exponential function on $K$, i.e., an isomorphism $f\mapsto \exp(f)$ between the ordered additive group of $K$ and the ordered multiplicative group $K^{>0}$ of positive elements of $K$. We often write $e^f$ instead of $\exp(f)$, and the inverse of $\exp$ is denoted by $\log\colon K^{>0}\to K$.
It is shown in \cite{TV09} (see also Section~\ref{sec:differentiability in residue fields} below) that there are mutually inverse group morphisms
$\exp\colon\widehat{\mathcal E}\to{\widehat{\mathcal E}}^{>0}$ and $\log\colon{\widehat{\mathcal E}}^{>0}\to\widehat{\mathcal E}$ 
with
$$\exp(\widehat{f}) = \widehat{\exp(f)},\quad \log(\widehat{g}) = \widehat{\log(g)}\qquad\text{for all $f,g\in\mathcal E$, $\widehat{g}>0$.}$$
Thus $(\widehat{\mathcal E},\exp)$ is an exponential field containing the real exponential field~$(\R,\exp)$ as an exponential subfield. 

\medskip
\noindent
Let now $\R[[x^\Z]]$ be the field of Laurent series in descending powers of $x$ with coefficients in~$\R$, made into an ordered field such that 
$x>\R$.
Then $\R[[x^\Z]]$ does not admit an exponential function, but one can embed $\R[[x^\Z]]$ into a large real closed ordered
field, whose elements are formal series (with monomials coming from some ordered abelian group extending $x^\Z$), which does carry an exponential function extending the  exponential function on~$\R$.
There are several ways of performing such a construction, leading
to different (non-isomorphic) exponential fields  of {\bf transseries.} One such construction leads to the exponential field $\RxLE$
of logarithmic-exponential series (or LE-series), introduced in \cite{DG,Ecalle} and further studied in \cite{DMM01,vdH97,vdH}. Another construction results in a  properly larger exponential field, first defined in \cite{vdH97} (and also employed in \cite{ADH05}), which is sometimes called the field of exponential-logarithmic series (or EL-series) and denoted here by~$\mathbb T$.

\medskip
\noindent
The following was asked in \cite{TV09}:

\begin{question}
Is there an embedding $\RxLE\to\widehat{\mathcal E}$ (of fields) which is the identity on~$\R$?
\end{question}

\noindent
Our aim in this note is to give a positive answer to this question,  under a sensible extra hypothesis on ${}^*\R$; in fact, we show that then the structure on $\widehat{\mathcal E}$ is even richer than suggested by the question above:

\begin{theorem}\label{thm:main theorem}
Suppose ${}^*\R$ is $\aleph_1$-saturated. Then there exists an elementary embedding $\mathbb T\to\widehat{\mathcal E}$ of exponential fields which is the identity on $\R$ and sends~$x$ to~$\xi$.
\end{theorem}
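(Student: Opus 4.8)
The plan is to identify $\widehat{\mathcal E}$, equipped with its induced exponential, as a model of the complete theory $T_{\exp}:=\mathrm{Th}(\R_{\exp})$ of the real exponential field $\R_{\exp}=(\R;+,\cdot,<,\exp)$, and then to exploit the good model-theoretic properties of $T_{\exp}$ to produce the embedding. First I would regard ${}^*\R$ as an elementary extension of $\R_{\exp}$; since non-standard extensions are elementary in every language, ${}^*\R_{\exp}:=({}^*\R;+,\cdot,<,{}^*\exp)$ is then a model of $T_{\exp}$, which by Wilkie's theorem is o-minimal and model complete, and by the exponential boundedness of $\R_{\exp}$ has the property that every $\R$-definable continuous function is eventually dominated by an iterated exponential $\exp_n$. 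The next step is to check that $\mathcal E$ is a $T_{\exp}$-convex subring of ${}^*\R_{\exp}$ in the sense of van den Dries and Lewenberg: $\mathcal E$ is a convex subring containing $\R$, it is closed under ${}^*\exp$ and $\log$ (a direct computation from $\abs{x}\le\exp_n(\xi)\Rightarrow{}^*\exp(x)\le\exp_{n+1}(\xi)$), and by exponential boundedness together with convexity this forces $\mathcal E$ to be closed under all $\R$-definable continuous functions. Granting this, the $T$-convexity theory of van den Dries and Lewenberg yields that $\widehat{\mathcal E}$ is naturally a model of $T_{\exp}$, that the induced exponential coincides with the one described above, and that $\R\preceq\widehat{\mathcal E}$.

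Second, I would invoke that $\mathbb T$, with its own exponential, is a model of $T_{\exp}$ with $\R\preceq\mathbb T$ (this is part of the construction of the EL-series in \cite{vdH97,ADH05}), in which $x$ is a positive infinite element. Because $T_{\exp}$ is o-minimal, $1$-types over the model $\R$ are determined by cuts, and both $x\in\mathbb T$ and $\widehat\xi\in\widehat{\mathcal E}$ fill the cut at $+\infty$ (they exceed every real); hence $x$ and $\widehat\xi$ realize the same type over $\R$, giving a partial elementary map $\R\cup\{x\}\to\widehat{\mathcal E}$ fixing $\R$ with $x\mapsto\widehat\xi$. By model completeness of $T_{\exp}$ it now suffices to extend this to an embedding of $\mathbb T$ into $\widehat{\mathcal E}$ as ordered exponential fields: any such embedding is automatically elementary and automatically respects $\exp$.

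The main obstacle is this last extension step, which is where the saturation hypothesis enters. The field $\mathbb T$ is built from $x$ by iterating the real-power, exponential, logarithm, and infinite-summation operations, and the monomials built from $x$ map transparently to the corresponding elements ($\xi^r$, $\exp_n(\xi)$, iterated logarithms of $\xi$, and so on) available inside $\mathcal E$; the difficulty lies entirely in realizing the infinite sums, that is, in filling by elements of $\widehat{\mathcal E}$ the pseudo-Cauchy sequences coming from $\mathbb T$. I would handle this by first establishing that $\aleph_1$-saturation of ${}^*\R$ passes to $\widehat{\mathcal E}$ (a type over a countable subset of $\widehat{\mathcal E}$ lifts to a finitely satisfiable family of conditions over a countable subset of $\mathcal E$, which $\aleph_1$-saturation of ${}^*\R$ realizes inside the convex, $\log$-$\exp$-closed ring $\mathcal E$), and then building the embedding by transfinite recursion, at each stage realizing the type of the next generator of $\mathbb T$ over the image constructed so far. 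The delicate point to verify is that each new element of $\mathbb T$ is pinned down over a countable subset, so that $\aleph_1$-saturation genuinely suffices at every stage despite $\mathbb T$ having size continuum; this reduces to controlling the lengths of the pseudo-Cauchy sequences arising in the layer-by-layer construction of $\mathbb T$.
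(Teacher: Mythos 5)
Your overall architecture (make $\widehat{\mathcal E}$ a model of $T_{\exp}$ via $T$-convexity, match $x$ with $\widehat{\xi}$ via the cut at $+\infty$, extend the partial map using saturation, and get elementarity for free from model completeness) runs parallel to the paper's, but the step where the saturation hypothesis actually enters contains a genuine error. Your claim that $\aleph_1$-saturation of ${}^*\R$ ``passes to $\widehat{\mathcal E}$'' is false: the elements $\widehat{\exp_n(\xi)}$, $n\in\N$, are cofinal in $\widehat{\mathcal E}$ (every element of $\mathcal E$ is bounded in absolute value by some $\exp_n(\xi)$), and dually the elements $\widehat{1/\exp_n(\xi)}$ are coinitial in $\widehat{\mathcal E}^{>0}$; hence $\widehat{\mathcal E}$ has countable cofinality and is not $\aleph_1$-saturated even as a pure ordered set. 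Your lifting argument breaks exactly at the point you rely on: a finitely satisfiable family of conditions over a countable subset of $\mathcal E$ is indeed realized in ${}^*\R$ by saturation, but the realization need not lie in $\mathcal E$ --- for the type $\{y>\exp_n(\xi): n\in\N\}$ it cannot, since $\mathcal E$ is by definition the smallest convex subring containing all $\exp_n(\xi)$. There is also a strictness problem for bounded cuts: realizing the lifted conditions $a_i<y<b_j$ upstairs does not guarantee $\widehat{a_i}<\widehat{y}<\widehat{b_j}$, since $y$ may differ from some $a_i$ by an element of $\mathcal e$.

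The paper avoids this trap by never realizing types inside $\widehat{\mathcal E}$ at all: using Lemma~\ref{lem:embedd} it extends the partial map to an elementary embedding $h\colon\mathbb T\to{}^*\mathbf R$ into ${}^*\R$ itself, where $\aleph_1$-saturation genuinely holds; the image lands inside $\mathcal E$ automatically because $h(\R\langle x\rangle)=\R\langle\xi\rangle\subseteq\mathcal E$, the set $\R\langle x\rangle$ is cofinal in $\mathbb T$ (the $\exp_n(x)$ are cofinal by construction), and $\mathcal E$ is convex. Only at the very end does one pass to the residue field, via van den Dries--Lewenberg: a maximal elementary substructure $\mathbf R'$ of ${}^*\mathbf R$ with $h(\mathbb T)\subseteq R'\subseteq\mathcal E$ maps isomorphically onto $\widehat{\mathcal E}$ under the residue map, and composing yields the desired elementary embedding $\mathbb T\to\widehat{\mathcal E}$. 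If you want to salvage your ``work downstairs'' plan, you would have to argue directly that the particular cuts coming from elements of $\mathbb T$ are realized in $\widehat{\mathcal E}$, which in effect reproduces this detour through ${}^*\mathbf R$. Separately, the ``delicate point'' you flag but do not resolve --- that every element of $\mathbb T$ determines a cut of countable character over any subset, despite $\abs{\mathbb T}=\mathfrak c$ --- is precisely where the paper invokes Esterle's theorem (Lemma~\ref{lem:Esterle} and Corollary~\ref{cor:Esterle}): every (reverse) well-ordered subset of $\R[[\mathfrak M]]$ is countable when the same holds of $\mathfrak M$, whence $\ch(\mathbb T)=\aleph_0$. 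Without this (or some substitute) your transfinite recursion does not go through, since ``controlling the lengths of the pseudo-Cauchy sequences'' is not a routine verification.
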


\noindent
Note that the hypothesis on ${}^*\R$ is satisfied automatically if ${}^*\R$ is obtained (as usual) as an ultrapower of $\R$. Theorem~\ref{thm:main theorem} is a special case of a general embedding result stated and proved in Section~\ref{sec:T-Convexity} below. As a consequence of this result, the embedding $\mathbb T\to\widehat{\mathcal E}$ in Theorem~\ref{thm:main theorem} can be  chosen to additionally respect the natural structure on $\mathbb T$, respectively $\widehat{\mathcal E}$, coming from the restricted real analytic functions; see Theorem~\ref{thm:main theorem, 2}. This embedding  can further be extended to an embedding, also respecting the restricted analytic structure, of the maximal immediate extension of $\mathbb T$ into~$\widehat{\mathcal E}$ (where $\mathbb T$ is equipped with the valuation whose valuation ring is the convex hull of $\R$ in $\mathbb T$); see Corollary~\ref{cor:main theorem}. 
We should mention that fields of transseries, even larger than~$\mathbb T$, are constructed in \cite{vdH97, Schmeling}, which also carry exponential functions which (presumably) make them elementary extensions of $(\R,\exp)$ of countable character (see Section~\ref{sec:ctble char}), so the embedding theorem in Section~\ref{sec:T-Convexity} could be applied to produce embeddings of such fields of transseries into $\widehat{\mathcal E}$.
(For another application of our embedding result, to surreal numbers, see Corollary~\ref{cor:surreal} below.)

\medskip
\noindent
The ordered field $\mathbb T$ has more structure than that dealt with in Theorem~\ref{thm:main theorem, 2}.  
For example, it
comes equipped with a natural derivation $\frac{d}{dx}$ making it an $H$-field in the sense of \cite{AvdD} (see, e.g., \cite{ADH05}).  By Lemma~\ref{lem:T models Tanexp} and the remarks following Lemma~\ref{lem:induced} below, $(\R,\exp)$ is an elementary substructure of both  $(\widehat{\mathcal E},\exp)$ and $(\mathbb T,\exp)$, so if $\widehat{\mathcal E}\neq\R$ then
also $(\widehat{\mathcal E},\exp,\R) \equiv (\mathbb T,\exp,\R)$ by \cite{vdDL}.
Thus, if the exponential field $(\widehat{\mathcal E},\exp)$ equipped with a predicate for its 
subfield $\R$ happens to be splendid in the sense of \cite[Section~10.1]{Hodges} (e.g., if it is saturated), then there also exists a derivation $\partial$ on $\widehat{\mathcal E}$ making this ordered field an $H$-field with constant field $\R$, such that $(\widehat{\mathcal E},\exp,\R,\partial)$ is elementarily equivalent to $(\mathbb T,\exp,\R,\frac{d}{dx})$.  
This raises the following question, not addressed in the present paper:

\begin{question}
Suppose  ${}^*\R$ is $\aleph_1$-saturated. 
Is there a derivation on $\widehat{\mathcal E}$ making this ordered field an $H$-field with constant field $\R$, and an elementary 
embedding $\mathbb T\to\widehat{\mathcal E}$ of exponential differential fields which is the identity on $\R$ and sends~$x$ to~$\xi$?
\end{question}

\subsection*{Organization of the paper}
We begin by recalling the construction of the exponential field $\mathbb T$ in Section~\ref{sec:T}. 
In Section~\ref{sec:character etc} we then show an embedding statement (Lemma~\ref{lem:embedd}) which is used
in the following Section~\ref{sec:T-Convexity} to prove the main theorem stated above. Section~\ref{sec:differentiability in residue fields} finally contains some remarks on the functions on residue fields of convex subrings of ${}^*\R$ induced by real $C^\infty$-functions.

\subsection*{Conventions and notations}
We let $m$ and $n$ range over the set $\N:=\{0,1,2,\dots\}$ of natural numbers.
We use ``ordered set'' synonymously with ``linearly ordered set.''
Let $S$ be an ordered set.  We let $S_{\pm\infty}$ be the set 
$S\cup\{+\infty,-\infty\}$, where $+\infty$ and $-\infty$ are distinct and not in~$S$, equipped with the extension of the ordering on $S$ to $S_{\pm\infty}$ satisfying $-\infty<s<+\infty$ for all $s\in S$, and we also let $S_\infty$ be the ordered subset $S\cup\{+\infty\}$ of $S_{\pm\infty}$.
An interval in $S$ is a subset of $S$ of the form
$$(a,b) = \{ x\in S : a<x<b \} \qquad (a,b\in S_{\pm\infty},\ a<b).$$
If $S'$ is an ordered set extending $S$, then for  $a,b\in S_{\pm\infty}$ with $a<b$ we also let
$$(a,b)_{S'} := \{ x\in S' : a<x<b \}.$$
We equip $S$ with the order topology, with a subbasis of open sets given by the  intervals, and we equip each cartesian power $S^n$ of $S$ with the corresponding product topology.

\subsection*{Acknowledgements} The first-named author was partially supported by NSF grant DMS-0969642
and the second-named author was partially supported by NSF grant DMS-1007144.
Both authors would like to thank Dave Marker for the argument in the example in Section~\ref{sec:differentiability in residue fields}.

\section{Constructing $\mathbb T$}\label{sec:T}

\noindent
In this section we explain the construction of the transseries field $\mathbb T$.

\subsection{Well-based series}
Let $\fM$ be an ordered abelian group, written {\em multiplicatively,}
with identity $1$. We refer to the elements of $\fM$ as {\bf monomials},
write the ordering on $\fM$ as $\preccurlyeq$, and put
$\fm\prec\fn$ if $\fm\preccurlyeq\fn$ and $\fm\neq\fn$, for $\fm,\fn\in\fM$.
Let $C$ be a field.  
A {\bf well-based series} with coefficients in $C$ and
monomials from $\fM$ is a mapping
$f\colon\fM\to C$ whose {\bf support} 
$$\supp f := \bigl\{\fm\in\fM: f(\fm)\neq 0\bigr\}$$
is {\bf well-based}, that is, 
there exists no infinite
sequence $\fm_1,\fm_2,\dots$ of monomials
in $\supp f$ with $\fm_1\prec \fm_2\prec\cdots$. 
We put $f_\fm=f(\fm)$, and we usually write $f$ as a formal sum
$$f=\sum_{\fm\in\fM} f_\fm \fm.$$
We denote the set of well-based series with coefficients in $C$ and monomials from
$\fM$ by $\CM$. It was first noted by Hahn (1907) that
$\CM$ is a field with respect to the natural addition and multiplication of
well-based series: 
$$f+g = \sum_{\fm\in\fM} (f_\fm +g_\fm)\, \fm, \qquad
f\cdot g = \sum_{\fm\in\fM} \left(\sum_{{\mathfrak u}\cdot 
{\mathfrak v}=\fm} f_{\mathfrak u}\cdot g_{\mathfrak v}\right)\fm.$$
We call $\CM$ the {\bf field of well-based series} (or the {\bf Hahn field}) with coefficients in $C$ and
monomials from $\fM$.
It contains $C$ as a subfield,
identifying $c\in C$ with the series $f\in \CM$ such that $f_1=c$ and
$f_\fm=0$ for $\fm \ne 1$.
Given $f\in\CM$ we call $f_1\in C$ the {\bf constant term} of $f$.

\begin{example}
Let $R$ be an ordered subgroup of the ordered additive group
$\R$, and let $\fM=x^R$ be a multiplicative copy of $R$, with 
order-preserving isomorphism $r\mapsto x^r\colon R\to x^R$. Then 
$\CM=C[[x^R]]$ is a Hahn field with coefficients in~$C$ and
monomials of the form $x^r$, $r\in R$. Taking $R=\Z$ we obtain 
the field of formal Laurent series in descending powers of $x$ 
with coefficients in~$C$.  
\end{example}

\noindent
The support of any non-zero $f\in\CM$, being well-based, 
has a maximal element
(with respect to $\preccurlyeq$), called the {\bf dominant monomial} 
$$\fd(f)=\max\supp f$$ of $f$. We also set $\fd(0):=0$, and 
extend $\preccurlyeq$ to a linear ordering on $\{0\}\cup \fM$
by declaring $0\preccurlyeq \fm$ for $\fm\in \fM$. This linear
ordering is extended to a binary relation on $\CM$ by
$$f\preccurlyeq g \qquad:\Longleftrightarrow\qquad \fd(f) 
\preccurlyeq \fd(g).$$ 
Every $f\in\CM$
can be decomposed as $$f=f^{\succ}+f_1+f^{\prec}$$ 
where $f_1\in C$ is the coefficient of $1\in\mathfrak M$ in $f$, and
\begin{align*}
f^{\succ} &= \sum_{\fm\succ 1} f_\fm \fm \qquad\text{ ({\bf infinite part} 
of $f$)},\\
f^{\prec} &= \sum_{\fm\prec 1} f_\fm \fm 
\qquad\text{ ({\bf infinitesimal part} of $f$).}
\end{align*}
This gives rise to a decomposition of $\CM$ into a direct sum of $C$-vector
spaces:
$$\CM = \CM^{\succ} \oplus C\oplus \CM^{\prec}$$
where 
\begin{align*}
\CM^{\succ} &:= \bigl\{f\in\CM: \text{$\fm\succ 1$ for all 
$\fm\in\supp f$}\bigr\}, \\
\CM^{\prec} &:= \bigl\{f\in\CM : \text{$\fm\prec 1$ for all 
$\fm\in\supp f$}\bigr\}.
\end{align*}
Let $\Gamma$ be an additively written copy of $\mathfrak M$, with isomorphism $\mathfrak m\mapsto v\mathfrak m\colon\mathfrak M\to\Gamma$, equipped with the ordering $\leq$ making this isomorphism decreasing: $\mathfrak m\preceq \mathfrak n \Longleftrightarrow v\mathfrak m\geq v\mathfrak n$, for all $\mathfrak m,\mathfrak n\in\mathfrak M$.
Then the map
$$v\colon \CM\to\Gamma_\infty, \qquad  vf=\begin{cases} v(\mathfrak d(f)) & \text{if $f\neq 0$} \\
\infty & \text{if $f=0$}\end{cases}$$
is a valuation on the field $\CM$, with valuation ring $\mathcal O:=\CM^{\preceq}=C\oplus \CM^{\prec}$, whose maximal ideal is $\mathcal o:=\CM^{\prec}$. The map sending $f\in\mathcal O$ to its constant term $f_1$ is a surjective ring morphism $\mathcal O\to C$, with kernel $\mathcal o$, and hence induces an isomorphism $\mathcal O/\mathcal o\to C$ between the residue field of $\mathcal O$ and the coefficient field $C$.

\subsection{Ordering of well-based series}
In the following we are mainly interested in the case where $C$ is equipped with an ordering making $C$ an ordered field.
Then we make $\CM$ 
into an ordered field extension of $C$ as follows:
for $0\neq f\in\CM$ define
$$f>0 \quad :\Longleftrightarrow\quad f_{\fd(f)}>0.$$
Note that for $f,g\in\CM$,
$$
f\preccurlyeq g \qquad\Longleftrightarrow\qquad \abs{f}\leq c\abs{g}
\text{ for some $c\in C^{>0}$.}
$$
The valuation ring $\mathcal O$ of $\CM$ is a convex subring of $\CM$.

\medskip
\noindent
Suppose $\fM_1$ and $\fM_2$ are subgroups of $\fM$ with $\fM_1$ 
convex in $\fM$,
$\fM=\fM_1\cdot\fM_2$, and $\fM_1\cap\fM_2=\{1\}$. Then we have an 
ordered field isomorphism
$$f =\sum_{\fm\in\fM} f_\fm\fm \mapsto \sum_{\fm_2\in\fM_2} 
\left(\sum_{\fm_1\in\fM_1}f_{\fm_1\fm_2}\fm_1\right)\fm_2 
 \colon  \quad C[[\fM]] \to (C[[\fM_1]])[[\fM_2]] $$
which is the identity on $C$. 
We identify $\CM$ and
$(C[[\fM_1]])[[\fM_2]]$ via this isomorphism whenever convenient.

\subsection{Analytic structure on Hahn fields}\label{sec:analytic on Hahn fields}
Let $\mathfrak M$ be a multiplicatively written ordered abelian group and $K=\R[[\mathfrak M]]$. Every analytic function
$f\colon (a,b)\to\R$ on an interval, where  $a,b \in\R_{\pm\infty}$, $a<b$, extends naturally to
$\widehat{f}\colon (a,b)_K\to K$, where
$$(a,b)_K=\bigl\{g\in K:a<g<b\bigr\}=\bigl\{c+\varepsilon:
c\in (a,b),\ \varepsilon\in K^{\prec 1}\bigr\},$$
given by
$$\hat{f}(c+\varepsilon) := 
\sum_{n=0}^\infty \frac{f^{(n)}(c)}{n!}\varepsilon^n
\qquad\text{for $c\in (a,b)$ and
$\varepsilon\in K^{\prec 1}$.}$$ 
For this, one needs to show that the infinite sum on the
right-hand side makes
sense in $K=\RM$; see, e.g., \cite[p.~64]{DMM01}. For example, the real
exponential function $c\mapsto e^c\colon\R\to\R^{>0}$ extends to the function
\begin{equation}\label{eq:extension exponential}
c+\varepsilon \mapsto \exp(c+\varepsilon)=e^{c+\varepsilon}:=
e^c\cdot\sum_{n=0}^\infty \frac{\varepsilon^n}{n!}
\qquad\text{($c\in\R$ and
$\varepsilon\in K^{\prec 1}$)}
\end{equation}
from $K^{\preceq 1}=\R\oplus K^{\prec 1}$ to $K^{>0}$.
For 
$f,g\in K^{\preccurlyeq 1}$, we have
$$\exp ( f )
\geqslant 1 \Leftrightarrow f \geqslant 0,\quad\exp ( f ) \geqslant f + 1,
\quad\text{and}\quad
\exp ( f + g ) = \exp ( f ) \exp ( g ).$$ 
Thus $\exp$ is injective with image 
$$K^{>0,\asymp 1}=\bigl\{g\in K: g > 0,\ \mathfrak{d}(g)=1\bigr\}$$ and inverse
$$\log \colon K^{>0,\asymp 1} \to K^{\preccurlyeq 1}$$
given by $$\log g := \log c + \log (1+\varepsilon)\qquad\text{ for $g=c(1+\varepsilon)$,
$c \in \R^{> 0}$, $\varepsilon \prec 1$,}$$ 
where $\log c$ is the usual natural 
logarithm of the positive real number $c$ and 
$$\log ( 1 + \varepsilon ):=\sum_{n = 1}^{\infty} \frac{( - 1 )^{n+1}}{n}
\varepsilon^n.$$
In a similar way, every real-valued function $f\colon I^n\to\R^n$, analytic on an open neighborhood of the cube $I^n=[-1,1]^n$ in $\R^n$, extends naturally to a function $\widehat{f}$ on the corresponding cube $I(K)^n$ in $K^n$ (taking values in~$K$).

\medskip
\noindent
One drawback of $K=\RM$ is that this ordered field does not support a (total) exponential
function if $\fM\ne \{1\}$, as shown in \cite{KKS}. 
Nonetheless, one can extend  $x^\Z$ to a
large ordered multiplicative group $\mathfrak T$ and $\R[[x^{\Z}]]$ to a real closed {\it subfield}\/ $\mathbb T$ of the well-based series field $\R[[\mathfrak T]]$, such that
$\mathfrak T \subseteq \mathbb T^{>0}$ 
inside $\R[[\mathfrak T]]$, 
and such that the usual
exponential function on $\R$ extends to
an exponential function on $\mathbb T$. 
In the following we outline the construction of such an exponential field~$\mathbb T$.

\subsection{The exponential field $\mathbb T$}
We begin by introducing the multiplicative group~$\mathfrak L$ of {\bf logarithmic monomials}:
this is the  multiplicatively written vector space
\[ \mathfrak{L} = \bigl\{ \ell_0^{\alpha_0} \ell_1^{\alpha_1} \cdots 
\ell_n^{\alpha_n} : 
(\alpha_0, \ldots, \alpha_n) \in \R^{n+1},\ n\in\N \bigr\} \]
over the field $\R$ with basis $(\ell_n)_{n\geq 0}$. We equip $\mathfrak L$ with the unique ordering $\preceq$ making it an ordered vector space over the ordered field $\R$ such that $\ell_n \succ \ell_{n+1}^m$ for each~$m$. We let $\mathbb L:= \R [[ \mathfrak{L} ]]$ be the ordered field of {\bf logarithmic transseries} and let $\exp\colon \mathbb L^{\preceq} \to \mathbb L^{>0,\asymp 1}$ be as defined in \eqref{eq:extension exponential}.
Note that the inverse $\log\colon\mathbb L^{>0,\asymp 1}\to  \mathbb L^{\preceq}$ of $\exp$ extends to a strictly increasing group homomorphism $\log\colon\mathbb L^{>0}\to\mathbb L$ by
\begin{equation}\label{eq:extension log}
\log(\mathfrak m\cdot g):=\log\mathfrak m + \log g\qquad\text{for $\mathfrak m\in\mathfrak L$, $g\in \mathbb L^{>0,\asymp 1}$,}
\end{equation}
where for $\mathfrak m= \ell_0^{\alpha_0} \ell_1^{\alpha_1} \cdots 
\ell_n^{\alpha_n}\in\mathfrak L$, $(\alpha_0, \ldots, \alpha_n) \in \R^n$, we set
$$\log \mathfrak m := \sum_{i=0}^n \alpha_i \ell_{i+1}\in \R[\mathfrak L].$$
So writing $x:=\ell_0$, we have $\ell_1=\log x, \ell_2=\log\log x,\dots$, and in general, $\ell_n$ is the $n$th iterated logarithm of $x$.

\medskip
\noindent
Now let $\mathfrak T_0:=\mathfrak L$, $K_0:=\mathbb L$.
Since there is no reasonable
way to define $\exp f$ as an element of $K_0$ for 
$f\in K_0^\succ$, we enlarge
$K_0=\R[[\mathfrak T_0]]$ to a bigger series field $K_1=\R[[\mathfrak T_1]]$ such that 
$\exp f\in K_1$ for
all $f\in K_0^\succ$: simply take a multiplicative 
copy $\exp(K_0^\succ)$ of the ordered additive
subgroup $K_0^\succ$ of $K_0$, with order-preserving isomorphism 
$$f\mapsto\exp f\colon
K_0^\succ\to\exp(K_0^\succ),$$
and form the direct product of multiplicative groups
$$\mathfrak T_1 := \exp\bigl(K_0^\succ\bigr)\cdot\mathfrak T_0.$$ 
Order $\mathfrak T_1$ lexicographically:
for $f\in K_0^\succ$, $\fm\in\mathfrak T_0$, put
$$\exp(f)\cdot\fm \succcurlyeq 1 \qquad\Longleftrightarrow\qquad f>0 
\text{ or }
(f=0 \text{ and } \fm \succcurlyeq 1 \text{ in $\mathfrak T_0$).}$$ 
The natural identification of $\mathfrak T_0$ with an ordered subgroup of 
$\mathfrak T_1$ makes $K_0$
an ordered subfield of $K_1$. Define
$\exp g \in K_1^{>0}$ for $g\in K_0$ by 
$$
\exp(f+c+\varepsilon):=
\exp(f)\cdot e^{c+\varepsilon}
\qquad (f\in K_0^{\succ},\ c\in\R,\ \varepsilon\in 
K_0^{\prec}),
$$
with $e^{c+\varepsilon}$ as in \eqref{eq:extension exponential}.
Now $K_1$ has the same defect as $K_0$: there is no reasonable way to
define $\exp f$ as an element of $K_1$ for $f\in K_1$ with 
$\fd(f)\succ\mathfrak T_0$. 
In order to add the exponentials of such elements
to $K_1$, enlarge $K_1$ to a field $K_2$ just as 
$K_0$ was enlarged to $K_1$. 
More generally, consider a tuple $(K,A,B,\log)$ where
\begin{enumerate}
\item $K$ is an ordered field;
\item $A$ and $B$ are additive subgroups of $K$ with $K=A\oplus B$ and $B$
convex in $K$;
\item $\log\colon K^{>0}\to K$ is a strictly increasing homomorphism
(the inverse of which we denote by $\exp\colon\log(K^{>0})\to K$)
such that $B\subseteq\log(K^{>0})$.
\end{enumerate}
We call such a quadruple $(K,A,B,\log)$ a {\bf pre-logarithmic ordered field}.
So $(K_0,A_0,B_0,\log_0)$ with 
$A_0:=K_0^\succ$, $B_0:=K_0^\preceq$ and
$\log_0\colon K_0^{>0}\to K_0$ given by \eqref{eq:extension log}
is a pre-logarithmic ordered field. 
Given a pre-logarithmic ordered field $(K,A,B,\log)$, define
a pre-logarithmic ordered field $(K',A',B',\log')$ as follows:
Take a multiplicative copy $\exp(A)$ of the ordered additive
group $A$ with order-preser\-ving isomorphism 
$$\exp_A\colon A\to\exp(A),$$
and put $$K':=K[[\exp(A)]],\quad A':=(K')^\succ,\quad B':=K^\preceq=K\oplus 
(K')^\prec,$$ and 
define $\log'\colon (K')^{>0}\to K'$ by
$$\log'\big(f\cdot\exp_A(a)\cdot(1+\varepsilon)\big) := \log f + a + \log(1+\varepsilon)$$
for $f\in K$, $a\in A$, $\varepsilon\in (K')^\prec$.
Note that $\log'$ extends $\log$ and $K\subseteq\log'\big((K')^{>0}\big)$. 
Moreover, if $K=\R[[\fM]]$ for some multiplicative ordered abelian group
$\fM$, then $K'=\R[[\fM']]$ where $\fM'=\exp(A)\cdot\fM$, ordered
lexicographically.
Inductively, set $$(K_{n+1},A_{n+1},B_{n+1},\log_{n+1}):=
(K_n',A_n',B_n',\log_n').$$ 
Then $K_n=\R[[\mathfrak T_n]]$ with 
$\mathfrak T_n=\mathfrak L\cdot\exp(A_{n-1}\oplus\cdots\oplus A_0)$, and we put
$$\mathbb T := \bigcup_n\, \R[[\mathfrak T_n]], \qquad \mathfrak T:=\bigcup_n\mathfrak T_n.$$
Thus $\mathbb T\subseteq \R[[\mathfrak T]]$ as ordered fields. We let 
$\log\colon\mathbb T^{>0}\to\mathbb T$ be the common extension of all the $\log_n$. The map $\log$ is a strictly increasing group isomorphism $\mathbb T^{>0}\to\mathbb T$, so its inverse $\exp\colon\mathbb T\to\mathbb T^{>0}$ is an exponential function on~$\mathbb T$. It is well-known that the exponential field $\RxLE$ of LE-series embeds into $\mathbb T$ in a natural way. (See~\cite{vdH97}, and also \cite{KT}.) However, this embedding is not onto, since, e.g., the series $\sum_n \frac{1}{\ell_n}=\frac{1}{\ell_0} + \frac{1}{\ell_1} + \cdots$ is an element of $\mathbb L$, but is not an LE-series (since iterated logarithms $\ell_n$ of arbitrary ``depth''~$n$ appear in it).

\section{Character, and an Embedding Result}\label{sec:character etc}

\noindent
In this section we define a cardinal invariant of an ordered set $(M,{<})$, which we call character, and which
is more meaningful than the cardinality of $M$ when one is concerned with realizing cuts in $M$.
We demonstrate this by proving an embedding criterion for models of o-minimal theories (Lemma~\ref{lem:embedd}) used in the proof of Theorem~\ref{thm:main theorem, 2}. We then recall some important examples of o-minimal structures.

\subsection{Character}\label{sec:ctble char}
Let $(M,{<})$ be an ordered set.
Recall that the cofinality of $(M,{<})$, denoted by $\operatorname{cf}(M,{<})$, or $\operatorname{cf}(M)$ for brevity, is the minimal cardinality of a cofinal subset of $M$. 
Recall that  $M$ always has a well-ordered cofinal subset of cardinality $\operatorname{cf}(M)$. 
It is also well-known that if $A$ is an ordered subset of $M$ which is cofinal in~$M$, then $\operatorname{cf}(A)=\operatorname{cf}(M)$.
Dually, the coinitiality of $(M,{<})$, denoted by 
$\operatorname{ci}(M,{<})=\operatorname{ci}(M)$, is the cofinality of $(M,{>})$. 

\medskip
\noindent
A {\bf cut} in $M$ is a subset $C$ of $M$ which is closed downward in $M$, i.e., if $x\in M$ satisfies $x<c$ for some $c\in C$, then $x\in C$. Given a cut $C$ in $M$, an element $x$ in an ordered set extending $M$ is said to {\bf realize} the cut $C$ if $C<x<M\setminus C$.
For every element $x$ in an ordered set extending $M$ with $x\notin M$, $x$ realizes the cut 
$$M^{<x}=\{c\in M: c<x\}$$ 
in $M$, which we call the {\bf cut of $x$ in $M$}.
The {\bf character} of a cut $C$ in $M$ is the pair $\big(\operatorname{cf}(C),\operatorname{ci}(M\setminus C)\big)$.
We define the {\bf character} of the ordered set $(M,{<})$, denoted by $\ch(M,{<})$ or simply by $\ch(M)$, to be the supremum
of $\operatorname{cf}(C)+\operatorname{ci}(M\setminus C)$, as $C$ ranges over all cuts in $M$.
If $A$ is an ordered subset of $M$ then $\ch(A)\leq\ch(M)$.


\medskip
\noindent
Every (reverse) well-ordered subset of the ordered set of real numbers is countable, so $\ch(\R)=\aleph_0$.   The following was shown by Esterle~\cite{Esterle} (see also \cite[p.~73]{DMM01}):

\begin{lemma}\label{lem:Esterle}
Let $\mathfrak M$ be an ordered abelian group.
Suppose each \textup{(}reverse\textup{)} well-ordered subset of $\mathfrak M$ is countable. Then each \textup{(}reverse\textup{)} well-ordered subset of $\R[[\mathfrak M]]$ is countable.
\end{lemma}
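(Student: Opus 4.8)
The plan is to argue directly by a transfinite extraction, pushing everything back to the hypothesis on $\fM$. First I would observe that $x\mapsto -x$ is an order-reversing automorphism of the ordered additive group of $\RM$, interchanging well-ordered and reverse well-ordered subsets; hence it suffices to show that $\RM$ has no uncountable well-ordered subset. Assume otherwise. Then $\RM$ contains a strictly increasing family $(f_\alpha)_{\alpha<\omega_1}$ (an initial segment of order type $\omega_1$ of the given uncountable well-ordered set). For $\alpha<\beta<\omega_1$ put $d(\alpha,\beta):=\fd(f_\beta-f_\alpha)\in\fM$, the dominant monomial of the positive element $f_\beta-f_\alpha$.

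The engine of the proof is the identity
\[ d(\alpha,\gamma)=\max\bigl(d(\alpha,\beta),\,d(\beta,\gamma)\bigr)\qquad(\alpha<\beta<\gamma). \]
This holds because $f_\beta-f_\alpha$ and $f_\gamma-f_\beta$ are both positive, hence have positive leading coefficients: when their dominant monomials coincide there is no cancellation at the top, and when they differ the larger one survives, so in all cases the dominant monomial of $f_\gamma-f_\alpha$ is the larger of the two. A first consequence is that, for fixed base point $\alpha$, the map $\beta\mapsto d(\alpha,\beta)$ is $\preceq$-nondecreasing; its range $V_\alpha$ therefore admits no infinite strictly $\prec$-decreasing sequence (such a sequence would pull back to a strictly decreasing sequence of ordinals), so $V_\alpha$ is a well-ordered subset of $\fM$. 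If some $V_\alpha$ is uncountable we already contradict the hypothesis that well-ordered subsets of $\fM$ are countable. So we may assume each $V_\alpha$ is countable, whence — since all ``first-hitting times'' form a countable, hence bounded, set of ordinals below the regular cardinal $\omega_1$ — the map $\beta\mapsto d(\alpha,\beta)$ is eventually equal to a single monomial $\fm^{\ast}=\max V_\alpha$.

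Next I would peel off this top monomial. Working past the stabilization point, the identity forces $\beta\mapsto(f_\beta)_{\fm^{\ast}}$ to be nondecreasing as a real-valued function: a strict decrease of the coefficient at $\fm^{\ast}$ would make some difference $f_\gamma-f_\beta$ negative at its dominant monomial, which is impossible. Its range is a well-ordered subset of $\R$, hence countable, hence eventually constant; beyond this second stabilization point every pairwise difference has dominant monomial strictly $\prec\fm^{\ast}$. Iterating this two-step stabilization transfinitely along $\omega_1$ then produces a strictly $\prec$-decreasing sequence of monomials $(\fm^{(i)})_{i<\omega_1}$: at each successor stage one extracts a new dominant monomial lying strictly below all previous ones, and at limit stages the construction survives because a countable supremum of base indices remains below $\omega_1$. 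But a strictly $\prec$-decreasing $\omega_1$-chain in $\fM$ is exactly an uncountable reverse well-ordered subset, contradicting the other half of the hypothesis.

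The main obstacle is organizing the transfinite recursion cleanly: at each countable stage one must carry along both the current base index and the guarantee that all remaining pairwise differences lie strictly below the monomials already extracted, and then verify at limit stages that the relevant indices stay below $\omega_1$ — this is where regularity of $\omega_1$ enters twice (to bound first-hitting times, and to pass limits). The conceptual crux, by contrast, is the max-identity for $d$, whose validity rests entirely on positivity of the differences ruling out top-degree cancellation; once it is in hand, each stabilization step reduces to the elementary fact that a nondecreasing map from $\omega_1$ into a countable well-ordered set is eventually constant. Note that both halves of the hypothesis on $\fM$ are genuinely used: the well-ordered half to control the ranges $V_\alpha$, and the reverse well-ordered half to finish via the decreasing monomial chain.
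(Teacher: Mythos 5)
Your proof is correct, but there is nothing in the paper to compare it against: the paper does not prove this lemma at all, it simply quotes it from Esterle \cite{Esterle} (see also \cite[p.~73]{DMM01}). Your argument is a complete, self-contained proof, and it is essentially the standard route to this result: extract a strictly increasing $\omega_1$-chain $(f_\alpha)$; use positivity of differences to get the identity $\fd(f_\gamma-f_\alpha)=\max\bigl(\fd(f_\beta-f_\alpha),\fd(f_\gamma-f_\beta)\bigr)$ (no cancellation at the top since both leading coefficients are positive); conclude that $\beta\mapsto\fd(f_\beta-f_\alpha)$ is nondecreasing, so its range is a well-ordered, hence countable, subset of $\fM$, and stabilizes by regularity of $\omega_1$; then stabilize the coefficient at the top monomial using countability of well-ordered subsets of $\R$; and iterate transfinitely, the recursion closing at limit stages again by regularity of $\omega_1$, to produce a strictly $\prec$-decreasing $\omega_1$-chain in $\fM$, contradicting the hypothesis. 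Each of these steps is sound as you state it. One cosmetic remark: your closing observation that ``both halves of the hypothesis on $\fM$ are genuinely used'' is a little misleading, since for an ordered abelian group the two halves are equivalent --- inversion $\fm\mapsto\fm^{-1}$ is an order-reversing bijection of $\fM$, just as negation is on $\RM$ --- so the hypothesis is really a single condition, and your reduction in the first step could equally well have been phrased on the $\fM$ side.
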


\noindent
From this we easily obtain:

\begin{cor}\label{cor:Esterle}
Each \textup{(}reverse\textup{)} well-ordered subset of $\mathbb T$ is countable. In particular,
$\ch(\mathbb T)=\aleph_0$.
\end{cor}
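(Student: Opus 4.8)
The plan is to deduce the Corollary from Lemma~\ref{lem:Esterle} by verifying that the monomial group $\mathfrak T$ underlying $\mathbb T$ satisfies the hypothesis of that lemma, i.e., that every (reverse) well-ordered subset of $\mathfrak T$ is countable. Once this is in place, Lemma~\ref{lem:Esterle} yields that every (reverse) well-ordered subset of $\R[[\mathfrak T]]$ is countable, and since $\mathbb T$ is an ordered subset (indeed a subfield) of $\R[[\mathfrak T]]$, the same holds for $\mathbb T$ and every subset of it. Throughout I will call an ordered set \emph{tame} if each of its well-ordered and each of its reverse well-ordered subsets is countable; the recalled fact that every (reverse) well-ordered subset of $\R$ is countable says precisely that $\R$ is tame, and it is immediate that any subset of a tame ordered set is again tame.

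I would first isolate two elementary closure properties of tameness. The union property: a countable union $S=\bigcup_n S_n$, with the $S_n$ ordered subsets of $S$, is tame whenever each $S_n$ is, since any well-ordered $W\subseteq S$ decomposes as $\bigcup_n(W\cap S_n)$, each $W\cap S_n$ being a well-ordered subset of the tame set $S_n$ and hence countable (and dually for reverse well-ordered $W$). The product property: a direct product $G=H\cdot N$ of two tame ordered abelian groups, ordered lexicographically with $H$ the dominant factor, is tame. For this, given a well-ordered $W\subseteq G$ with projection $\pi\colon G\to H$, the image $\pi(W)$ is well-ordered in $H$ — an infinite strictly descending sequence in $\pi(W)$ would lift to one in $W$, since the dominant coordinates already strictly descend — hence countable; and for each $h\in\pi(W)$ the fibre $\{w\in W:\pi(w)=h\}$ is order-isomorphic, via the $N$-coordinate, to a well-ordered subset of $N$, hence countable. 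Thus $W$ is a countable union of countable sets. The reverse well-ordered case is symmetric. This product property is the only step requiring an argument, and I expect it to be the technical crux, though it is routine.

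With these tools I would run the induction built into the construction of $\mathbb T$. For the base case, $\mathfrak L=\bigcup_k\mathfrak L_{\le k}$, where $\mathfrak L_{\le k}=\bigl\{\ell_0^{\alpha_0}\cdots\ell_k^{\alpha_k}\bigr\}$ is, under $\ell_0^{\alpha_0}\cdots\ell_k^{\alpha_k}\mapsto(\alpha_0,\dots,\alpha_k)$, an iterated lexicographic product of $k+1$ copies of the tame group $\R$; so each $\mathfrak L_{\le k}$ is tame by the product property, and $\mathfrak L=\mathfrak T_0$ is tame by the union property. For the inductive step, assume $\mathfrak T_n$ is tame. Then $K_n=\R[[\mathfrak T_n]]$ is tame by Lemma~\ref{lem:Esterle}, so its subset $A_n=K_n^{\succ}$ is tame, and hence so is the order-isomorphic copy $\exp(A_n)$. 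Since $\mathfrak T_{n+1}=\exp(A_n)\cdot\mathfrak T_n$ is the lexicographic product with dominant factor $\exp(A_n)$, the product property shows $\mathfrak T_{n+1}$ is tame. Therefore every $\mathfrak T_n$ is tame, and $\mathfrak T=\bigcup_n\mathfrak T_n$ is tame by the union property, which completes the verification of the hypothesis of Lemma~\ref{lem:Esterle} and hence the proof that every (reverse) well-ordered subset of $\mathbb T$ is countable.

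Finally I would read off $\ch(\mathbb T)=\aleph_0$. For any cut $C$ in $\mathbb T$, the value $\operatorname{cf}(C)$ is the cardinality of a well-ordered cofinal subset of $C$; this is a well-ordered subset of $\mathbb T$, hence countable, so $\operatorname{cf}(C)\le\aleph_0$, and dually $\operatorname{ci}(\mathbb T\setminus C)\le\aleph_0$ via a reverse well-ordered coinitial subset. Thus $\operatorname{cf}(C)+\operatorname{ci}(\mathbb T\setminus C)\le\aleph_0$ for every cut $C$, giving $\ch(\mathbb T)\le\aleph_0$; and since $\R$ is an ordered subset of $\mathbb T$ we have $\ch(\mathbb T)\ge\ch(\R)=\aleph_0$. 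Hence $\ch(\mathbb T)=\aleph_0$.
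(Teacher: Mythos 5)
Your proof is correct and takes essentially the same route as the paper's: the paper likewise reduces the statement to the countable-union observation plus an induction on $n$ using Lemma~\ref{lem:Esterle}, and simply asserts that this induction "follows easily," whereas you spell out the lexicographic-product lemma and the base case for $\mathfrak L$ that the induction needs. The only (cosmetic) difference is the final step, where you apply Lemma~\ref{lem:Esterle} once more to the full monomial group $\mathfrak T$ and use $\mathbb T\subseteq\R[[\mathfrak T]]$, while the paper instead writes $\mathbb T=\bigcup_n\R[[\mathfrak T_n]]$ and invokes the union property directly.
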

\begin{proof}
If the ordered set $M$ is the union of countably many ordered subsets, each of whose (reverse) well-ordered subsets is countable, then each (reverse) well-ordered subset of $M$ is countable. Thus  we only need to show that each (reverse) well-ordered subset of the ordered subfield $\R[[\mathfrak T_n]]$ in the construction of $\mathbb T$ is countable. This follows easily by induction on $n$, using the lemma above.  
\end{proof}

\noindent
Let $\alpha$ be an ordinal and $(M,{<})$ be an ordered set. Then  $(M,{<})$ is called an $\eta_\alpha$-set if for all subsets $A$, $B$ of $M$ with $A<B$ and $\abs{A\cup B}<\aleph_\alpha$, there is an element~$x$ of $M$ with $A<x<B$. Note that if  the ordered set $(M,{<})$  is dense without endpoints, then $(M,{<})$ is an $\eta_\alpha$-set iff $(M,{<})$,
viewed as a structure in the language $\{<\}$,  is $\aleph_\alpha$-saturated (a consequence of  the theory of $(M,{<})$ admitting quantifier elimination). We note the following obvious fact:

\begin{lemma}\label{lem:sat}
Let $N$ be  an $\eta_\alpha$-set extending $M$, where $\aleph_\alpha>\ch(M)$. Then each cut in~$M$ is realized by an element of $N$.
\end{lemma}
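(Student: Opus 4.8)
The plan is to prove Lemma~\ref{lem:sat} directly from the definitions of character and of an $\eta_\alpha$-set. Let $C$ be a cut in $M$. I want to produce an element $x \in N$ realizing $C$, i.e.\ with $C < x < M\setminus C$. The key observation is that I do not need to use the \emph{full} lower set $C$ and upper set $M\setminus C$ as the two sides when invoking the $\eta_\alpha$-property; instead I can use cofinal/coinitial \emph{witnessing} subsets of each, which are small enough to fall under the cardinality bound.

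\begin{proof}
Let $C$ be a cut in $M$; we must find $x\in N$ with $C<x<M\setminus C$. Choose a cofinal subset $A$ of $C$ with $\abs{A}=\operatorname{cf}(C)$ and a coinitial subset $B$ of $M\setminus C$ with $\abs{B}=\operatorname{ci}(M\setminus C)$. Since $C<M\setminus C$ in $M$ and $A\subseteq C$, $B\subseteq M\setminus C$, we have $A<B$ (viewing $A,B$ as subsets of the ordered set $N$ extending $M$). By definition of character,
$$\abs{A\cup B}\leq\abs{A}+\abs{B}=\operatorname{cf}(C)+\operatorname{ci}(M\setminus C)\leq\ch(M)<\aleph_\alpha.$$
Because $N$ is an $\eta_\alpha$-set, there is $x\in N$ with $A<x<B$. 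It remains to check that this $x$ realizes $C$. If $c\in C$, then since $A$ is cofinal in $C$ there is $a\in A$ with $c\leq a<x$, so $c<x$; thus $C<x$. Similarly, if $d\in M\setminus C$, then since $B$ is coinitial in $M\setminus C$ there is $b\in B$ with $x<b\leq d$, so $x<d$; thus $x<M\setminus C$. Hence $x$ realizes the cut $C$.
\end{proof}

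The only genuine subtlety — the step I would flag as the place to be careful — is the passage from ``$A<x<B$'' back to ``$C<x<M\setminus C$,'' which is precisely where cofinality of $A$ in $C$ and coinitiality of $B$ in $M\setminus C$ are used; without replacing $C$ and $M\setminus C$ by these witnessing sets, the cardinality bound $\abs{A\cup B}<\aleph_\alpha$ need not hold. Two degenerate cases deserve a remark but cause no trouble: if $C=\emptyset$ then $A=\emptyset$ and one uses only the coinitiality side (and symmetrically if $C=M$), and if $C$ already has a maximum or $M\setminus C$ a minimum the same argument goes through with the relevant $\operatorname{cf}$ or $\operatorname{ci}$ equal to $1$. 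Since the lemma is described in the text as ``obvious,'' I expect no real obstacle beyond bookkeeping the cofinal/coinitial replacement correctly.
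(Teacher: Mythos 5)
Your proof is correct, and since the paper records this lemma only as an ``obvious fact'' with no written argument, what you give is exactly the intended one: replace the two sides of the cut by cofinal/coinitial witnessing subsets of cardinality at most $\operatorname{cf}(C)+\operatorname{ci}(M\setminus C)\leq\ch(M)<\aleph_\alpha$, invoke the $\eta_\alpha$-property of $N$, and transfer $A<x<B$ back to $C<x<M\setminus C$ using cofinality and coinitiality. Nothing further is needed; your handling of the degenerate cases is also fine.
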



\subsection{An embedding criterion}
In the following we let $\mathcal L$ be  a first-order language containing a binary relation symbol $<$.
An $\mathcal L$-structure $\mathbf M=(M,{<},\dots)$ expanding a dense linearly ordered set $(M,{<})$ without endpoints is said to be {\bf o-minimal} if each subset of $M$ which is definable in $\mathbf M$ is a finite union of intervals $(a,b)$ (where $a,b\in M_{\pm\infty}$) and singletons $\{c\}$ ($c\in M$). 
(Here and in the rest of this paper, ``definable'' means ``definable, possibly with parameters.'')
By~\cite{KPS}, every structure elementarily equivalent to an o-minimal $\mathcal L$-structure is also o-minimal, and in this case, the complete theory $\operatorname{Th}(\mathbf M)$ of $\mathbf M$ is called o-minimal.
We refer to \cite{vdD93} for a summary of basic facts from the theory of o-minimal structures used in this note.

Let now $T$ be a complete o-minimal $\mathcal L$-theory and let $\mathbf M\models T$.
We recall that the o-minimality assumption implies that given an elementary extension $\mathbf N$ of $\mathbf M$,
there is a one-to-one correspondence between the type of an element $x\in N\setminus M$ over $M$ and the cut of $x$ in $M$. In particular, if $\kappa>\abs{\mathcal L}$, then $\mathbf M$ is $\kappa$-saturated iff its underlying ordered set $(M,{<})$ is $\kappa$-saturated.
We also recall that
given a subset $A$ of $M$, there is a prime model of $T$ over $A$, i.e., a model $\mathbf N$ of $T$ with $A\subseteq N$ such that each elementary map $A\to N'$, where $\mathbf N'\models T$, extends to an elementary embedding $\mathbf N\to\mathbf N'$. This prime model of $T$ over $A$ is unique up to isomorphism over $A$. (See \cite[Theorem~5.1]{PS}.) Given 
an elementary extension $\mathbf N=(N,{<},\dots)$ of $\mathbf M$ and $A\subseteq N$, we let $\mathbf M\langle A\rangle$ denote the prime model of $T$ over $A\cup M$, taken as an elementary substructure of $\mathbf N$ (implicitly understood from context).
We also write $\mathbf M\langle x\rangle$ instead of $\mathbf M\langle \{x\}\rangle$. 
If also $\mathbf M\preceq\mathbf N'$, and $x\in N\setminus M$  and $x'\in N'\setminus M$ realize the same cut in $M$, then there is a (unique)  isomorphism $\mathbf M\langle x\rangle\to\mathbf M\langle x'\rangle$ which is the identity on $M$ and sends $x$ to $x'$.
 
\medskip
\noindent
We can now easily show: 

\begin{lemma}\label{lem:embedd}
Let $\mathbf M=(M,{<},\dots)$ and $\mathbf N=(N,{<},\dots)$ be models of $T$ and suppose that $(N,{<})$ is $\kappa^+$-saturated, where $\kappa = \ch(M)$. 
Then every elementary embedding of an elementary substructure of $\mathbf M$ into $\mathbf N$ extends to an elementary embedding of $\mathbf M$ into $\mathbf N$.
\end{lemma}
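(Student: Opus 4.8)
The plan is to extend the embedding one point at a time by transfinite recursion, using the o-minimal correspondence between types and cuts to reduce each step to realizing a single cut, and using the character bound to guarantee that this cut is always small enough to be realized in the saturated ordered set $(N,{<})$.

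Denote the given elementary substructure by $\mathbf M_0\preceq\mathbf M$ and the given elementary embedding by $e_0\colon\mathbf M_0\to\mathbf N$, and fix an enumeration $M=\{a_\xi:\xi<\lambda\}$. I would construct a continuous increasing chain $(\mathbf M_\xi)_{\xi\le\lambda}$ of elementary substructures of $\mathbf M$, together with elementary embeddings $e_\xi\colon\mathbf M_\xi\to\mathbf N$ extending one another, so that $a_\xi\in M_{\xi+1}$ at every stage; then $\mathbf M_\lambda=\mathbf M$ and $e_\lambda$ is the desired extension of $e_0$. At limit stages one simply takes unions, $\mathbf M_\xi=\bigcup_{\eta<\xi}\mathbf M_\eta$ and $e_\xi=\bigcup_{\eta<\xi}e_\eta$; since a union of an elementary chain is an elementary extension of each of its members, $\mathbf M_\xi\preceq\mathbf M$ and $e_\xi$ is elementary.

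The content is in the successor step. Suppose $\mathbf M_\xi\preceq\mathbf M$ and $e_\xi\colon\mathbf M_\xi\to\mathbf N$ are given, and put $\mathbf M_\xi':=e_\xi(\mathbf M_\xi)\preceq\mathbf N$. If $a_\xi\in M_\xi$, set $\mathbf M_{\xi+1}:=\mathbf M_\xi$ and $e_{\xi+1}:=e_\xi$. Otherwise, let $C:=M_\xi^{<a_\xi}$ be the cut of $a_\xi$ in $M_\xi$ and let $C'$ be its image under the order isomorphism $e_\xi$, a cut in $M_\xi'$. Because $M_\xi$ is an ordered subset of $M$, we have $\ch(M_\xi')=\ch(M_\xi)\le\ch(M)=\kappa$, hence $\operatorname{cf}(C')+\operatorname{ci}(M_\xi'\setminus C')\le\kappa<\kappa^+$; as $(N,{<})$ is $\kappa^+$-saturated, Lemma~\ref{lem:sat} yields an element $b\in N$ realizing $C'$ in $M_\xi'$ (and $b\notin M_\xi'$, since no element of an ordered set lies strictly between the two halves of one of its own cuts). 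Now $a_\xi$ and $b$ realize corresponding cuts under the isomorphism $e_\xi\colon\mathbf M_\xi\cong\mathbf M_\xi'$, so by the o-minimal type–cut correspondence recalled before the lemma, $e_\xi$ extends to an isomorphism of $\mathbf M_\xi\langle a_\xi\rangle$ onto $\mathbf M_\xi'\langle b\rangle\preceq\mathbf N$ sending $a_\xi$ to $b$. Taking $\mathbf M_{\xi+1}:=\mathbf M_\xi\langle a_\xi\rangle$ (the prime model computed inside $\mathbf M$) and $e_{\xi+1}$ this isomorphism finishes the step; in particular $\mathbf M_{\xi+1}\preceq\mathbf M$, so its character is again at most $\kappa$ and the recursion can continue.

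The one point requiring care—and the reason character rather than cardinality is the relevant invariant—is that at every stage the cut $C'$ to be realized has character at most $\kappa$, no matter how large $M_\xi$ has grown, simply because $M_\xi$ remains an ordered subset of $M$. This is exactly what keeps each such cut within reach of the $\kappa^+$-saturation of $(N,{<})$ throughout the transfinite construction; the o-minimality then upgrades ``realizes the cut'' to ``realizes the full type,'' which is what makes the resulting extension elementary.
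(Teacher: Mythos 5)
Your proof is correct and is essentially the paper's argument: the same combination of the character bound $\ch(M_\xi)\le\ch(M)=\kappa$, Lemma~\ref{lem:sat} applied to the image cut in the $\kappa^+$-saturated ordered set $(N,{<})$, and the o-minimal cut--type correspondence plus prime models to extend by one element. The only difference is bookkeeping: the paper runs the one-step extension through Zorn's lemma (take a maximal extension and derive a contradiction if its domain is proper), whereas you organize it as a transfinite recursion along an enumeration of $M$.
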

\begin{proof}
Let $\mathbf A\preceq\mathbf M$ and let
$h\colon\mathbf A\to\mathbf N$ be an elementary embedding.
By Zorn we may assume that $h$ has no extension to an elementary embedding of an elementary substructure of $\mathbf M$, properly containing $\mathbf A$, into $\mathbf N$. Suppose $M\neq A$ and take $x\in M\setminus A$ arbitrary. By $\kappa^+$-saturation of $(N,{<})$ the image $h(A^{<x})$ of the cut of~$x$ in $A$ is realized in $N$, say by $y\in N$. (Lemma~\ref{lem:sat}.) Then the extension of $h$ to a map $A\cup\{x\}\to N$ with $x\mapsto y$ is elementary, and hence  further extends to an elementary embedding $\mathbf A\langle x\rangle\to\mathbf N$, contradicting the maximality of~$\mathbf A$.
\end{proof}

\noindent
Together with the existence of prime models, this yields an improvement, for o-minimal theories, of the general fact that $\kappa$-saturated models of complete first-order theories are $\kappa$-universal:

\begin{cor}
Let $\mathbf N\models T$ be $\kappa$-saturated, where $\kappa$ is an infinite cardinal. 
Then every model $\mathbf M$ of $T$ with $\ch(M)<\kappa$ elementarily embeds into $\mathbf N$.
\end{cor}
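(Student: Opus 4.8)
The plan is to reduce to Lemma~\ref{lem:embedd} by first reconciling the saturation hypotheses and then supplying an initial elementary embedding via a prime model. Write $\lambda:=\ch(M)$; the hypothesis $\ch(M)<\kappa$ gives $\lambda^+\leq\kappa$, so the $\kappa$-saturated model $\mathbf N$ is in particular $\lambda^+$-saturated. I would then pass to the order reduct: any set of $\{<\}$-formulas over a parameter set of size $<\lambda^+$ that is finitely satisfiable in $(N,<)$ is finitely satisfiable in $\mathbf N$ (the interpretation of $<$ being the same), hence realized in $\mathbf N$, hence realized in $(N,<)$. Thus $(N,<)$ is $\lambda^+$-saturated. (This is just the easy direction of the o-minimal equivalence recalled above, valid for the reduct of any structure and requiring no hypothesis on $\abs{\mathcal L}$.)

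Next I would obtain the initial embedding. Let $\mathbf{M}_0$ be the prime model of $T$ over $\emptyset$, which exists by the existence of prime models over arbitrary parameter sets recalled above. By definition of a prime model, the empty elementary map into any model of $T$ extends to an elementary embedding of $\mathbf{M}_0$ into that model; applying this to $\mathbf M$ we may regard $\mathbf{M}_0$ as an elementary substructure of $\mathbf M$, and applying it to $\mathbf N$ yields an elementary embedding $h\colon\mathbf{M}_0\to\mathbf N$.

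Finally I would invoke Lemma~\ref{lem:embedd} with its cardinal ``$\kappa$'' instantiated as $\lambda=\ch(M)$. Its hypothesis is exactly that $(N,<)$ be $\lambda^+$-saturated, established in the first step, and $h$ is an elementary embedding of the elementary substructure $\mathbf{M}_0$ of $\mathbf M$ into $\mathbf N$; the lemma therefore extends $h$ to an elementary embedding $\mathbf M\to\mathbf N$, as required. The only point needing care---and the sole obstacle---is this reconciliation of saturation cardinals: the $\kappa$ of Lemma~\ref{lem:embedd} is $\ch(M)$, whereas the $\kappa$ here is the saturation cardinal, and it is precisely the strict inequality $\ch(M)<\kappa$ that converts $\kappa$-saturation of $\mathbf N$ into the $\lambda^+$-saturation of $(N,<)$ demanded by the lemma. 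Everything else is a routine application of prime models and of Lemma~\ref{lem:embedd}.
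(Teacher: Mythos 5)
Your proposal is correct and takes essentially the same route the paper intends: the corollary is presented there as an immediate consequence of Lemma~\ref{lem:embedd} ``together with the existence of prime models,'' which is precisely your argument (embed the prime model of $T$ over $\emptyset$ elementarily into both $\mathbf M$ and $\mathbf N$, then extend via the lemma). The cardinal bookkeeping you spell out---$\ch(M)<\kappa$ gives $\ch(M)^+\leq\kappa$, and saturation of $\mathbf N$ passes to its order reduct $(N,{<})$---is exactly the routine detail the paper leaves implicit.
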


\subsection{Some examples of o-minimal expansions of the ordered field of reals}
All the examples of o-minimal structures that we will need are obtained as expansions of the ordered field 
$\overline{\R}=(\R,{<},{0},{1},{+},{\times})$ of real numbers in  a language extending the
language of ordered rings. (The structure $\overline{\R}$ itself is o-minimal as a consequence of Tarski's quantifier elimination theorem. Its elementary theory is axiomatized by the axioms for real closed ordered fields.)
We remark that to any such o-minimal expansion $\mathbf R$ of $\overline{\R}$, Miller's Growth Dichotomy Theorem \cite{M} applies:
{\it either}\/ $\mathbf R$ is {\bf polynomially bounded} (i.e., for every definable function $f\colon\R\to\R$ there is some $n$ such that $\abs{f(x)}\leq x^n$ for sufficiently large $x$), {\it or}\/ the exponential function $\exp\colon\R\to\R$ is $0$-definable in $\mathbf R$. (For example, $\overline{\R}$ is polynomially bounded.)
One calls $\mathbf R$ {\bf exponentially bounded} if for every definable function $f\colon\R\to\R$ there is some $n$ such that $\abs{f(x)}\leq\exp_n(x)$ for sufficiently large $x$.
(At present, {\it all}\/ known o-minimal expansion of the ordered field of real numbers are exponentially bounded.) Note that polynomial boundedness and exponential boundedness are part of the elementary theory of $\mathbf R$; this allows us to speak of an o-minimal theory extending the theory of real closed ordered fields being
polynomially bounded or exponentially bounded.

\subsubsection{The ordered field of reals with restricted analytic functions}
A restricted analytic function is a function $\R^n\to\R$ which is given on $I^n=[-1,1]^n$ by a power series in $n$ variables with real coefficients converging on a neighborhood of $I^n$, and which is $0$ outside of $I^n$.
The structure $\R_{\operatorname{an}}$ is the expansion of $\overline{\R}$ by the restricted analytic functions $\R^n\to\R$, for varying $n$. 
The structure $\R_{\operatorname{an}}$ was shown to be o-minimal in \cite{vdD86} and model-complete in \cite{DvdD};
$\R_{\operatorname{an}}$ is also polynomially bounded.
A complete axiomatization of $T_{\operatorname{an}}:=\operatorname{Th}(\R_{\operatorname{an}})$ is given in~\cite{DMM94}, where it is also shown that if $\mathfrak M$ is a {\it divisible}\/ ordered abelian group, then the ordered field $\R[[\mathfrak M]]$, expanded to a structure in the language $\mathcal L_{\operatorname{an}}$ of $\R_{\operatorname{an}}$ as indicated in Section~\ref{sec:analytic on Hahn fields} above, is an elementary extension of $\R_{\operatorname{an}}$.
It follows that~$\mathbb T$, viewed as an $\mathcal L_{\operatorname{an}}$-structure in the natural way, is a model of $T_{\operatorname{an}}$ (since it is obtained as the union of the increasing elementary chain of $\mathcal L_{\operatorname{an}}$-structures $\R[[\mathfrak T_n]]$) and hence an elementary extension of~$\R_{\operatorname{an}}$ .

\subsubsection{The exponential field of reals}
The ordered field of real numbers $(\overline{\R},\exp)$ augmented by the real exponential function was shown to be model-complete and o-minimal by Wilkie \cite{W}.

\subsubsection{The exponential field of reals with restricted analytic functions}
This is the expansion $\R_{\operatorname{an},\exp}$ of $\R_{\operatorname{an}}$ by the real exponential function. 
In \cite{vdDM} this structure was shown to be model-complete and o-minimal, by a generalization of Wilkie's proof for $(\overline{\R},\exp)$. A simpler proof, and a complete axiomatization of 
$T_{\operatorname{an,exp}}:=\operatorname{Th}(\R_{\operatorname{an,exp}})$, is given in \cite{DMM94}.
In fact, $T_{\operatorname{an,exp}}$ is axiomatized by $T_{\operatorname{an}}$ together with (the universal closures of) the following statements about $\exp$:
\begin{itemize}
\item[(E1)] $\exp(x+y)=\exp(x)\cdot\exp(y)$;
\item[(E2)] $x<y\rightarrow \exp(x)<\exp(y)$;
\item[(E3)] $x>0 \rightarrow \exists y \exp(y)=x$;
\item[(E4$_n$)] $x>n^2 \rightarrow \exp(x)>x^n$ (for each $n>0$);
\item[(E5)] $-1\leq x\leq 1\rightarrow e(x)=\exp(x)$, where $e$ is the function symbol of $\mathcal L_{\operatorname{an}}$ which represents the restricted analytic function $e\colon\R\to\R$ with $e(x)=e^x$ for $x\in [-1,1]$.
\end{itemize}
Based on this axiomatization, in \cite[Corollary~2.8]{DMM97} it was shown that $\RxLE$ is a model of $T_{\operatorname{an,exp}}$  (and consequently, that $T_{\operatorname{an,exp}}$ is exponentially bounded). We observe here in a similar way:

\begin{lemma}\label{lem:T models Tanexp}
$\mathbb T\models T_{\operatorname{an,exp}}$.
\end{lemma}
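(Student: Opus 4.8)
The plan is to use the axiomatization recalled above: $T_{\operatorname{an,exp}}$ is axiomatized by $T_{\operatorname{an}}$ together with (E1)--(E5). As already noted, $\mathbb T$ viewed as an $\mathcal L_{\operatorname{an}}$-structure is an elementary extension of $\R_{\operatorname{an}}$, so $\mathbb T\models T_{\operatorname{an}}$; it therefore remains only to verify that the exponential function built on $\mathbb T$ in Section~\ref{sec:T} satisfies the exponential axioms (E1)--(E5). The axioms (E1), (E2), (E3) are immediate from the construction: by design $\exp\colon\mathbb T\to\mathbb T^{>0}$ is the inverse of the strictly increasing group isomorphism $\log\colon\mathbb T^{>0}\to\mathbb T$, hence a strictly increasing isomorphism of the ordered additive group $(\mathbb T,+)$ onto the ordered multiplicative group $(\mathbb T^{>0},\cdot)$, which yields the homomorphism law (E1), monotonicity (E2), and surjectivity (E3) at once. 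Axiom (E5) is likewise built in: any $x$ with $-1\le x\le 1$ lies in $\mathbb T^{\preceq}=\R\oplus\mathbb T^{\prec}$, so $x=c+\varepsilon$ with $c\in[-1,1]$ and $\varepsilon\prec 1$, and by \eqref{eq:extension exponential} both the interpretation $e(x)$ of the restricted-analytic symbol $e$ (as defined in Section~\ref{sec:analytic on Hahn fields}) and the value $\exp(x)$ equal $e^{c}\sum_{k}\varepsilon^{k}/k!$, so they coincide.

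The substantive point is the family of growth axioms (E4$_n$): given $x>n^2$ I must show $\exp(x)>x^n$, and here I would split according to whether $x$ is infinite or finite. If $x$ is infinite (so $x>\R$ and $x^\succ>0$), I pick $m$ with $x\in K_m=\R[[\mathfrak T_m]]$; then $\exp(x^\succ)\in\exp(A_m)\subseteq\mathfrak T_{m+1}$ is a monomial, and the lexicographic ordering on $\mathfrak T_{m+1}=\exp(A_m)\cdot\mathfrak T_m$ together with $x^\succ>0$ gives $\exp(x^\succ)\succ\mathfrak n$ for every $\mathfrak n\in\mathfrak T_m$. Since $\fd(x^n)\in\mathfrak T_m$ and $\fd(\exp x)=\exp(x^\succ)$, it follows that $\exp(x)\succ x^n$, and as both are positive this yields $\exp(x)>x^n$ for every $n$. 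If $x$ is finite, then $x>n^2$ forces $c:=\st(x)\ge n^2$, while $\st(\exp x)=e^{c}$ and $\st(x^n)=c^n$; since $e^{c}>c^n$ holds in $\R$ for all real $c\ge n^2$ (the case $c>n^2$ being (E4$_n$) in $\R$, and the boundary case $c=n^2$ the elementary inequality $e^{n^2}>n^{2n}$), the element $\exp(x)-x^n$ has positive standard part and is therefore positive.

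I expect (E4$_n$) to be the only real obstacle, the other axioms being formal consequences of the construction of $\exp$. Within (E4$_n$), the infinite case rests on correctly invoking the lexicographic ordering that makes the new monomial $\exp(x^\succ)$ dominate every monomial of the level $\mathfrak T_m$ containing $x$, and the finite case on the routine reduction, via standard parts, to the corresponding inequality for the real exponential function.
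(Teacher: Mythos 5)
Your proposal is correct and follows essentially the same route as the paper's proof: reduce to the axiomatization $T_{\operatorname{an}}$ + (E1)--(E5), dispose of (E1)--(E3) and (E5) as immediate from the construction of $\exp$ and of the restricted-analytic structure, and verify (E4$_n$) by the same case split on whether the infinite part of $x$ vanishes, using dominance of the new monomial $\exp(x^{\succ})$ over $\mathfrak T_m$ in one case and the real inequality $e^c>c^n$ for $c\ge n^2$ in the other. The only cosmetic difference is that you conclude the infinite case from $\exp(x)\succ x^n$ together with positivity of both sides, where the paper phrases this via the sign of the leading coefficient of $\exp(x)$.
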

\begin{proof}
We already noted that $\mathbb T\models  T_{\operatorname{an}}$.
Clearly the exponential function on $\mathbb T$ satisfies (E1)--(E3) by construction.  

We prove (E4$_n$).  Suppose $n>0$ and $f\in \mathbb T$ is such that $f>n^2$.  We need $\exp(f)>f^n$.  Take $m$ such that $f\in \R[[\mathfrak T_m]]$.  Write $f=g+c+\epsilon$, with $g\in \R[[\mathbb T_m]]^{\succ}$, $c\in \R$, and $\epsilon \in \R[[\mathbb T_m]]^{\prec}$.  We first suppose that $g=0$.  We then have $\fd(\exp(f))=1$ with leading coefficient $e^c$, while $\fd(f^n)=1$ with leading coefficient $c^n$.  Since $c\geq n^2$, we have $e^c> c^n$ and thus $\exp(f)>f^n$.  We now suppose that $g\not=0$.  Then $\fd(\exp(f))=\exp(g)\cdot \fd(\exp(c+\epsilon))\succ \fd(f^n)$ since $\fd(f^n)\in \frak T_m$.  Thus, $\exp(f)>f^n$ if and only if the leading coefficient of $\exp(f)$ is positive; however, this leading coefficient is the coefficient of $\fd(\exp(c+\epsilon))$ in $\exp(c+\epsilon)$, which is positive.

Next we prove (E5).  Suppose that $f\in \mathbb T$ with $-1\leq f\leq 1$.  Take $m$ such that $f\in \R[[\mathfrak T_m]]$.  Then $f\in \R[[\mathfrak T_m]]^{\preceq}$, say $f=c+\epsilon$, with $c\in \R$ and $\epsilon \in \R[[\mathfrak T_m]]^{\prec}$.  Then $e(f)=\exp(f)=e^c\sum_{n=0}^\infty \frac{\epsilon^n}{n!}$.
\end{proof}

\section{$T$-Convexity}\label{sec:T-Convexity}

\noindent
In this section we let ${\mathbf R}=(\R,{<},{0},{1},{+},{\times},\dots)$ be an o-minimal expansion of the ordered field of real numbers in  a language $\cal L$ extending the
language of ordered rings. ``Definable'' will always mean ``definable in $\mathbf R$.'' Note that the $\mathcal L$-reduct 
${}^*\mathbf R$ of ${}^*\R$ is an elementary extension of $\mathbf R$.  We let $T=\operatorname{Th}(\mathbf R)$. Given a $0$-definable set $X\subseteq\R^n$ and $\mathbf S\models T$, we let $X(\mathbf S)$ be the subset of $S^n$ defined by the same $\mathcal L$-formula as $X$ in $\mathbf R$.
We first recall the definition of $T$-convexity from \cite{vdDL} and some fundamental facts concerning
this notion, and then give the proof of the main theorem from the introduction. In the last subsection we give another application of the embedding lemma from Section~\ref{sec:character etc}.

\subsection{Definition and basic properties of $T$-convex subrings}
Let $f\colon X\to\R$ be a $0$-definable function, where $X\subseteq\R^n$. We say that a convex subring $\mathcal O$ of ${}^*\R$ is {\bf closed under $f$} if $f\big(X({}^*\mathbf R)\cap\mathcal O^n\big)\subseteq\mathcal O$.
A convex subring of ${}^*\R$ is called {\bf $T$-convex}
if it is closed under all $0$-definable continuous functions $\R\to \R$. Although this definition only talks about one-variable functions, every $T$-convex subring of ${}^*\R$ is automatically closed under all $0$-definable continuous functions $\R^n\to\R$ for all~$n>0$~\cite[(2.9)]{vdDL}. In fact:

\begin{lemma}\label{lem:T-convex closure properties}
Every $T$-convex subring of ${}^*\R$  is closed under all $0$-definable continuous functions $X\to\R$ where $X\subseteq\R^n$ is open or closed.
\end{lemma}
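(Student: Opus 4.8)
The plan is to reduce the general case of an open or closed definable domain $X\subseteq\R^n$ to the already-known case of continuous functions defined on all of $\R^n$. Let me lay out the statement I want to prove: given a $T$-convex subring $\mathcal O$ of ${}^*\R$ and a $0$-definable continuous function $f\colon X\to\R$ with $X\subseteq\R^n$ open or closed, I must show $f\big(X({}^*\mathbf R)\cap\mathcal O^n\big)\subseteq\mathcal O$. The cited fact \cite[(2.9)]{vdDL} already gives closure under $0$-definable continuous functions defined on all of $\R^n$, so the entire difficulty is that $f$ is only defined on the subset $X$, and I need to modify $f$ to a globally defined continuous function without changing its values on the relevant inputs.

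First I would handle the closed case, since it is cleaner. Suppose $X\subseteq\R^n$ is closed and $0$-definable, and $f\colon X\to\R$ is $0$-definable and continuous. By a definable version of the Tietze extension theorem, valid in o-minimal expansions of $\overline\R$ (see \cite{vdD93}), $f$ extends to a $0$-definable continuous function $F\colon\R^n\to\R$ with $F|_X=f$. Now take any $a\in X({}^*\mathbf R)\cap\mathcal O^n$. Since $T$-convexity gives closure under $F$, we have $F(a)\in\mathcal O$. It remains only to check that $F(a)=f(a)$; but the identity $F|_X=f$ is expressed by a $0$-definable condition ($\forall x\in X,\ F(x)=f(x)$), which holds in $\mathbf R$ and hence in ${}^*\mathbf R$ by elementarity, so $F(a)=f(a)$ for $a\in X({}^*\mathbf R)$. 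Thus $f(a)\in\mathcal O$, as desired.

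For the open case I would reduce to the closed case. Let $X$ be open and $0$-definable and $f\colon X\to\R$ be $0$-definable continuous. The natural move is to exhaust $X$ by closed definable sets: for each $k>0$ let $X_k=\{x\in X:\operatorname{dist}(x,\R^n\setminus X)\geq 1/k,\ \abs{x}\leq k\}$, a $0$-definable closed (indeed compact) set with $X=\bigcup_k X_k$, and apply the closed case to each restriction $f|_{X_k}$. The subtlety is that a given $a\in X({}^*\mathbf R)\cap\mathcal O^n$ need not lie in any single ${}^*X_k$ with standard $k$. Here I expect the main obstacle, and I would resolve it by working directly with the condition defining membership: the point $a$ lies in $X({}^*\mathbf R)$ and has coordinates in $\mathcal O$, so $\operatorname{dist}(a,{}^*\R^n\setminus X({}^*\mathbf R))$ is a positive element and $\abs{a}$ is bounded by some element of $\mathcal O$; one must then locate an appropriate (possibly nonstandard) truncation level. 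An alternative, and I think cleaner, route avoids exhaustion entirely: use a definable continuous bump-type modification. Concretely, choose a $0$-definable continuous $g\colon\R^n\to\R$ that agrees with $f$ on a closed definable neighborhood retract and vanishes appropriately, built again via definable Tietze extension applied to $f$ restricted to a suitable closed subset — reducing the open case to the closed case already handled.

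The cleanest formulation is therefore to prove the closed case as above and then observe that for the open case one applies the closed case to the restriction of $f$ to the closed $0$-definable set $\{x\in X : \varphi(x)\}$ for the appropriate closure condition, using elementarity to transfer the equality of functions and the membership conditions from $\mathbf R$ to ${}^*\mathbf R$. The heart of the argument, and the step I would write most carefully, is the definable Tietze extension together with the transfer principle guaranteeing $F|_{X({}^*\mathbf R)}=f$; everything else is a routine application of the already-cited closure property and the elementarity of ${}^*\mathbf R$ over $\mathbf R$.
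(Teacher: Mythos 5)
Your treatment of the closed case is correct and is exactly the paper's argument: a definable Tietze extension (the paper cites \cite[Lemma~6.6]{AF} or \cite[Chapter~8]{vdD98} rather than \cite{vdD93}, but that is a citation quibble) yields a $0$-definable continuous $F\colon\R^n\to\R$ with $F|_X=f$; closure under $F$ comes from $T$-convexity via \cite[(2.9)]{vdDL}; and elementarity transfers $F|_X=f$ to ${}^*\mathbf R$, so $f(a)=F(a)\in\mathcal O$ for all $a\in X({}^*\mathbf R)\cap\mathcal O^n$.

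The open case is where your proposal has a genuine gap, and it is a gap you cannot close, because the open-domain half of the lemma, read literally with the paper's definition of ``closed under $f$,'' is false. You correctly isolate the obstacle: a point $a\in X({}^*\mathbf R)\cap\mathcal O^n$ need not lie in ${}^*X_k$ for any standard $k$, namely when $d:=\operatorname{dist}\big(a,{}^*\R^n\setminus X({}^*\mathbf R)\big)$ is infinitesimal. But your proposed remedies (``locate an appropriate (possibly nonstandard) truncation level,'' a ``bump-type modification,'' restriction to ``a suitable closed subset'') are never made concrete, and no choice can work: take $X=(0,\infty)$ and $f(x)=1/x$, both $0$-definable with $f$ continuous on $X$; for any proper $T$-convex subring $\mathcal O$ (e.g.\ ${}^*\R_{\operatorname{fin}}$) and any $a\in\mathcal o$ with $a>0$, we have $a\in X({}^*\mathbf R)\cap\mathcal O$ but $f(a)=a^{-1}\notin\mathcal O$ by the very definition of $\mathcal o$. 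You should also know that the paper's own proof shares this defect: its reduction via $X=\bigcup_{s>0}X_s$ only treats points lying in some $X_s$ with $s$ a standard (hence $0$-definable) positive real, i.e.\ points whose distance $d$ to the complement is non-infinitesimal, and silently ignores the remaining points, for which the conclusion actually fails. The statement that is true, provable from your closed case, and sufficient for the paper's later use (the discussion around Lemma~\ref{lem:induced, 2}, where the relevant points lie within $\mathcal o^n$ of points of $X(\mathbf R')$) is: $f(a)\in\mathcal O$ for every $a\in X({}^*\mathbf R)\cap\mathcal O^n$ with $d\notin\mathcal o$. To get it, apply the closed case to the $0$-definable closed set $\tilde X=\big\{(x,s)\in\R^{n+1}: s\cdot\operatorname{dist}(x,\R^n\setminus X)\geq 1\big\}$ and the continuous $0$-definable function $(x,s)\mapsto f(x)$ on $\tilde X$, evaluated at $(a,d^{-1})$, noting that $d^{-1}\in\mathcal O$ precisely because $d\notin\mathcal o$.
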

\begin{proof}
By the definable version of the Tietze Extension Theorem (see \cite[Lemma~6.6]{AF} or \cite[Chapter~8]{vdD98}), each $0$-definable continuous function $X\to\R$ on a closed set $X\subseteq\R^n$ has an extension to a $0$-definable continuous function $\R^n\to\R$. This implies the lemma for $0$-definable continuous functions whose domain is a closed subset of $\R^n$, and this in turn yields the lemma also for $0$-definable continuous functions whose domain is open: if $X\subseteq\R^n$, $X\neq\R^n$, is open then
$X = \bigcup_{s>0} X_s$ where
$$X_s:= \big\{ x\in\R^n: \operatorname{dist}(x,\R^n\setminus X)\geq s\big\}$$
is closed, for $s\in\R^{>0}$.
\end{proof}

\begin{examples}
If $T$ is polynomially bounded then every convex subring of ${}^*\R$ is $T$-convex. If $T$ is non-polynomially bounded but exponentially bounded, then a convex subring $\mathcal O$ of ${}^*\R$ is $T$-convex iff $\mathcal O$ is closed under $\exp\colon\R\to\R$.  
In particular, the convex subring $\mathcal E$ of ${}^*\R$, defined in the introduction, is $T_{\operatorname{an},\exp}$-convex, since it is closed under $\exp$ (and is, indeed, the smallest convex subring of ${}^*\R$ containing $\R$ and the infinite element $\xi$ which is closed under $\exp$).
\end{examples}

\noindent
It is an easy consequence of the Monotonicity Theorem for o-minimal structures that the convex hull in ${}^*\R$ of an elementary substructure of ${}^*\mathbf R$ is a $T$-convex subring of ${}^*\mathbf R$.
A strong converse of this observation is shown in \cite{vdDL}, stated in the theorem below. Note that if $\mathcal O$ is a convex subring of ${}^*\mathbb R$ and $R'$ is a subring of $\mathcal O$ which is a field, then the composition $R'\to\mathcal O\to\widehat{\mathcal O}$ is injective, and this map is bijective iff $\mathcal O=\mathbf R'+\mathcal o$; in this case
$\mathcal o = \{x\in {}^*\R: \abs{x}< (R')^{>0}\}$, so $\mathcal O$ is the convex hull of $R'$ in ${}^*\R$. 

\begin{theorem}[van den Dries-Lewenberg \cite{vdDL}]
Let $\mathcal O$ be a $T$-convex subring of ${}^*\mathbf R$. Then:
\begin{enumerate}
\item an elementary substructure $\mathbf R'$ of ${}^*\mathbf R$ contained in $\mathcal O$ is maximal among the elementary substructures of ${}^*\mathbf R$ contained in $\mathcal O$ iff $\mathcal O=\mathbf R'+\mathcal o$ \textup{(}and hence, by Zorn, there is some $\mathbf R'\preceq{}^*\mathbf R$ with $\mathcal O=\mathbf R'+\mathcal o$\textup{)};
\item if $\mathbf R'$ and $\mathbf R''$ are both maximal with respect to being elementary substructures of ${}^*\mathbf R$ contained in $\mathcal O$, then there is a unique isomorphism $h\colon \mathbf R'\to\mathbf R''$ such that $\widehat{h(x)}=\widehat{x}$ for all $x\in R'$.
\end{enumerate}
\end{theorem}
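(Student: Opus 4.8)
The plan is to prove the two statements separately, deriving (2) from (1) together with the behaviour of the residue map on $T$-convex rings. Write $\operatorname{res}\colon\mathcal O\to\widehat{\mathcal O}$, $x\mapsto\widehat{x}$, for the residue map, and recall that since $R'$ is a field contained in $\mathcal O$ we have $R'\cap\mathcal o=\{0\}$ and $\operatorname{res}|_{R'}$ is injective. The easy implication of (1) is a pure field argument: assume $\mathcal O=\mathbf R'+\mathcal o$ and let $\mathbf R''\preceq{}^*\mathbf R$ with $R'\subseteq R''\subseteq\mathcal O$. For $b\in R''$ write $b=r+\epsilon$ with $r\in R'$, $\epsilon\in\mathcal o$; then $\epsilon=b-r\in R''\cap\mathcal o=\{0\}$, so $b=r\in R'$, whence $R''=R'$ and $\mathbf R'$ is maximal.

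For the hard implication I would establish the key lemma: if $a\in\mathcal O$ satisfies $a\notin R'+\mathcal o$ (equivalently $(a-r)^{-1}\in\mathcal O$ for every $r\in R'$, i.e. $a$ is not infinitely close to any element of $R'$), then $\mathbf R'\langle a\rangle\subseteq\mathcal O$. Granting it, maximality forces $\mathcal O=\mathbf R'+\mathcal o$: otherwise any $a\in\mathcal O\setminus(\mathbf R'+\mathcal o)$ yields, via the lemma, an elementary substructure $\mathbf R'\langle a\rangle$ of ${}^*\mathbf R$ contained in $\mathcal O$ and properly extending $\mathbf R'$ (as $a\notin R'$), a contradiction. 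To prove the lemma, note that every element of $\mathbf R'\langle a\rangle=\operatorname{dcl}(R'\cup\{a\})$ is $f(a)$ for some $R'$-definable unary $f$; by the Monotonicity Theorem $f$ is continuous and monotone on the interval $(c,d)\ni a$ with $c,d\in R'_{\pm\infty}$ and one-sided limits in $R'_{\pm\infty}$. If both limits are finite, then $f(a)\in\mathcal O$ by convexity. The remaining case, in which $f$ blows up at an endpoint $w\in\{c,d\}$, is where $T$-convexity (not mere convexity) is indispensable, and it is the main obstacle. The idea is to bound the growth of $f$ near $w$ by a $0$-definable continuous $g\colon\R\to\R$, so that $\abs{f(a)}\leq g(z)$ with $z=(a-c)^{-1}$ or $(d-a)^{-1}$ when $w$ is a finite pole, and $z=\pm a$ when $w=\pm\infty$; in each case $z\in\mathcal O$ (the finite-pole case is exactly where $a\notin R'+\mathcal o$ enters, giving $(a-w)^{-1}\in\mathcal O$), so $g(z)\in\mathcal O$ by $T$-convexity and hence $f(a)\in\mathcal O$ by convexity. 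By the Growth Dichotomy and the Examples preceding the theorem, $g$ may be taken to be a power $z\mapsto z^N$ when $T$ is polynomially bounded (closure being automatic, as $\mathcal O$ is a ring) and an iterated exponential when $T$ is exponentially bounded; in general the uniform bounds on the growth of definable functions supplied by o-minimality furnish such a $0$-definable majorant, and $T$-convexity is precisely the closure needed to absorb it.

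Finally, for (2), part (1) gives $\mathcal O=R'+\mathcal o=R''+\mathcal o$, so both $\operatorname{res}|_{R'}$ and $\operatorname{res}|_{R''}$ are bijections onto $\widehat{\mathcal O}$ (surjectivity from $\mathcal O=R'+\mathcal o$, injectivity from $R'\cap\mathcal o=\{0\}$). Endowing $\widehat{\mathcal O}$ with its induced $\mathcal L$-structure, for which it is a model of $T$ and onto which $\operatorname{res}$ restricts to an $\mathcal L$-isomorphism on any maximal elementary substructure (the companion residue-field result of van den Dries--Lewenberg), the composite $h:=(\operatorname{res}|_{R''})^{-1}\circ(\operatorname{res}|_{R'})\colon\mathbf R'\to\mathbf R''$ is an $\mathcal L$-isomorphism, hence elementary since $T$ is complete, and it satisfies $\widehat{h(x)}=\widehat{x}$ by construction. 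Uniqueness is again a field argument: if $h_1,h_2\colon\mathbf R'\to\mathbf R''$ both satisfy $\widehat{h_i(x)}=\widehat{x}$, then for each $x\in R'$ we have $h_1(x)-h_2(x)\in R''\cap\mathcal o=\{0\}$, so $h_1=h_2$.
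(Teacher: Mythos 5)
Note first that the paper you were given does not actually prove this theorem: it is quoted from van den Dries--Lewenberg \cite{vdDL} (the paper's own contribution begins after it), so your attempt has to be measured against the argument in \cite{vdDL}. Your skeleton does match that argument: the easy direction of (1), the reduction of the hard direction to the key lemma that $a\in\mathcal O\setminus(R'+\mathcal o)$ implies $\mathbf R'\langle a\rangle\subseteq\mathcal O$, and the construction of $h$ in (2) from the residue map. Your easy direction of (1) and your uniqueness argument in (2) are correct. But each of the two genuinely hard points is asserted rather than proved.

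The first gap is the crux of the key lemma. Writing $f=F(r,\cdot)$ with $F$ $0$-definable and $r\in (R')^k$, what your argument requires is that $F(r,\cdot)$ be \emph{eventually} dominated by a single $0$-definable continuous $g$, \emph{uniformly in the parameter} $r$, with the threshold of domination definable from $r$ (hence lying in $R'$). This family-uniform domination statement is a nontrivial theorem about o-minimal expansions of $\overline{\R}$ --- it needs cell decomposition together with compactness of parameter boxes (or a model-theoretic compactness argument) --- and it is exactly where the real work in \cite{vdDL} lies; it cannot be waved in as ``the uniform bounds supplied by o-minimality.'' Your fallback cases inherit the same defect: polynomial and exponential boundedness, as defined, concern single definable functions, and to get one exponent $N$ (or one iterate of $\exp$) valid for \emph{all} $r\in(R')^k$ you again need the family version; moreover an arbitrary o-minimal $T$, which is what the theorem covers, need not be exponentially bounded, so the Growth Dichotomy does not dispose of the general case. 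There is a further imprecision in how the bound is applied: since the domination by $g$ is only eventual, to conclude $\abs{f(a)}\leq g(z)$ you must know that $z$ lies beyond the $R'$-definable threshold, i.e.\ $z>R'$. This is true precisely in the sub-case where no element of $R'$ separates $a$ from the endpoint $w$ (in the opposite sub-case one instead bounds $f(a)$ by values of $f$ at points of $R'$ straddling $a$, and no growth bound is needed); you never make this case split, and without it the inequality at the specific point $z$ is unjustified.

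The second gap is that your proof of (2) is circular. The ``companion residue-field result'' you invoke --- that $\widehat{\mathcal O}$ carries a model structure onto which the residue map restricts to an $\mathcal L$-isomorphism from \emph{any} maximal elementary substructure --- is not an independent input: it is precisely the well-definedness statement that part (2) expresses, and indeed both \cite{vdDL} and the present paper (see the paragraph following the theorem) \emph{define} the $\mathcal L$-structure on $\widehat{\mathcal O}$ by transporting it along $\operatorname{res}|_{R'}$ for one chosen maximal $\mathbf R'$, citing (2) to see that the result is independent of the choice. A non-circular argument takes your map $h$ (equivalently: $h(x)$ is the unique element of $R''$ with $x-h(x)\in\mathcal o$, which exists by part (1) applied to $\mathbf R''$ and is unique since $R''\cap\mathcal o=\{0\}$) and proves directly that it preserves every $0$-definable \emph{continuous} function $f\colon\R^n\to\R$, using Lemma~\ref{lem:induced} (i.e.\ \cite[(2.20)]{vdDL}): from $x-h(x)\in\mathcal o^n$ one gets $f(x)-f(h(x))\in\mathcal o$, and $f(h(x))\in R''$, whence $h(f(x))=f(h(x))$; one then bootstraps from continuous functions to preservation of all $0$-definable sets (hence to $h$ being an $\mathcal L$-isomorphism) via cell decomposition and definable Tietze extension. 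That bootstrapping is the actual content of (2), and it is absent from your proposal.
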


\noindent
In the following we fix a $T$-convex subring $\mathcal O$ of~${}^*\mathbf R$. By part (1) of the theorem above, we can make $\widehat{\mathcal O}$ into a model of $T$ as follows: take an $\mathbf R'$ which is maximal among elementary substructures of ${}^*\mathbf R$ contained in $\mathcal O$, and make $\widehat{\mathcal O}$ into an $\mathcal L$-structure such that the restriction of the residue map $x\mapsto\widehat{x}\colon \mathcal O\to\widehat{\mathcal O}$ is an isomorphism $\mathbf R'\to\widehat{\mathcal O}$. By part (2) of the above theorem, this expansion of $\widehat{\mathcal O}$ to a model of $T$ is independent of the choice of $\mathbf R'$. In the following we will always consider 
$\widehat{\mathcal O}$ as a model of $T$ in this way.

\medskip
\noindent
The following is  \cite[(2.20)]{vdDL}:

\begin{lemma}\label{lem:induced}
Let $f\colon \R^n\to\R$ be a $0$-definable continuous function. Then for all $x,y\in \mathcal O^n$  we have:
$x-y\in\mathcal o^n \ \Longrightarrow\ f(x)-f(y)\in\mathcal o$.
\end{lemma}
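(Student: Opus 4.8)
The plan is to show that the interpretation of $f$ in ${}^*\mathbf R$ respects the maximal ideal, and the decisive tool is the decomposition $\mathcal O = \mathbf R' + \mathcal o$ provided by part~(1) of the van den Dries--Lewenberg theorem: fix an elementary substructure $\mathbf R'\preceq{}^*\mathbf R$ with $\mathcal O = \mathbf R' + \mathcal o$. First I would use this to reduce to a one-sided statement. Given $x,y\in\mathcal O^n$ with $x-y\in\mathcal o^n$, write $x = r+\delta$ with $r\in (R')^n$ and $\delta\in\mathcal o^n$; then $y = r+\delta'$ with $\delta' = \delta - (x-y)\in\mathcal o^n$, so $x$ and $y$ reduce to the \emph{same standard point} $r$. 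Since $f(x)-f(y) = \big(f(x)-f(r)\big)-\big(f(y)-f(r)\big)$, it suffices to prove the claim that for every $z = r+\eta$ with $r\in(R')^n$ and $\eta\in\mathcal o^n$ one has $f(z)-f(r)\in\mathcal o$.

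For this claim I would transfer the continuity of $f$ at the now-standard center $r$. Continuity of $f$ at every point is a parameter-free first-order property, hence belongs to $T=\operatorname{Th}(\mathbf R)$ and holds in $\mathbf R'$; so for each $\varepsilon\in (R')^{>0}$ the continuity statement at $r$ (with parameters $r,\varepsilon\in R'$) is witnessed by some $\delta_0\in (R')^{>0}$ for which
\[
\textstyle\bigwedge_{i} \abs{u_i - r_i} < \delta_0 \ \Longrightarrow\ \abs{f(u)-f(r)}<\varepsilon
\]
holds in $\mathbf R'$, and therefore --- since $\mathbf R'\preceq{}^*\mathbf R$ --- also for all $u\in({}^*\R)^n$. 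Now $\eta\in\mathcal o^n$ means each $\eta_i$ is infinitesimal, so $\abs{z_i - r_i} = \abs{\eta_i} < \delta_0$ because $\delta_0$ is a positive element of $R'$ and every element of $\mathcal o$ satisfies $\abs{\cdot}<(R')^{>0}$. The displayed implication then gives $\abs{f(z)-f(r)}<\varepsilon$. As $\varepsilon\in (R')^{>0}$ was arbitrary, $f(z)-f(r)\in\mathcal o$, proving the claim and hence the lemma.

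The step I expect to be the main obstacle --- or at least the one requiring the most care --- is securing a \emph{standard} modulus of continuity $\delta_0\in R'$: this is exactly where the decomposition $\mathcal O=\mathbf R'+\mathcal o$ (rather than merely $T$-convexity) and the elementarity $\mathbf R'\preceq{}^*\mathbf R$ are indispensable. Pointwise continuity cannot be transferred around a genuinely non-standard center, so without a point $r\in(R')^n$ to expand about, the $\varepsilon$--$\delta_0$ argument does not even get started; the whole proof hinges on first replacing $x$ and $y$ by their common standard representative $r$. The remaining ingredient, that infinitesimals lie below every positive element of $R'$, is immediate from the description $\mathcal o = \{z\in{}^*\R:\abs{z}<(R')^{>0}\}$ recorded before the theorem.
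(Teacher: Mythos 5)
Your proof is correct and is essentially the paper's own approach: the paper simply cites \cite[(2.20)]{vdDL} for this lemma, but its proof of the variant Lemma~\ref{lem:induced, 2} is exactly your argument --- use $\mathcal O=\mathbf R'+\mathcal o$ to reduce to a center $r\in (R')^n$, then transfer an $\varepsilon$--$\delta$ continuity statement with parameters in $R'$ from $\mathbf R'$ to ${}^*\mathbf R$ by elementarity, using $\mathcal o=\{z\in{}^*\R:\abs{z}<(R')^{>0}\}$. No gaps; your extra reduction step (writing $x$ and $y$ over the common standard representative $r$) is precisely what is needed to pass from the paper's one-sided variant to the two-variable statement.
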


\noindent
In particular, if $f$ is as in the previous lemma, and $\widehat f$ denotes the function $\widehat{\mathcal O}^n\to\widehat{\mathcal O}$ defined by the same formula in the $\mathcal L$-structure $\widehat{\mathcal O}$ as $f$ in $\mathbf R$, then by the lemma, we have $\widehat{f}(\widehat{x})=\widehat{f(x)}$ for all $x\in\mathcal O^n$. Here and below, for $n>0$ and $x=(x_1,\dots,x_n)\in\mathcal O^n$ 
we  set $\widehat{x}=(\widehat{x_1},\dots,\widehat{x_n})\in\widehat{\mathcal O}^n$.

\medskip
\noindent
It was noted in \cite{TV12,TV09} that  (as a consequence of the Mean Value Theorem) the exponential function on ${}^*\R$ induces a function on $\widehat{\mathcal E}$; that is, 
there is a function $\widehat{\exp}\colon \widehat{\mathcal E}\to \widehat{\mathcal E}$  such that
$\widehat{\exp}(\widehat{x})=\widehat{\exp(x)}$ for all $x\in\mathcal E$. 
By the discussion above, applied to $\mathbf R=(\overline{\R},\exp)$, we now see that 
this function $\widehat\exp$ agrees with the interpretation of the function symbol $\exp$ of the language $\mathcal L_{\exp}$ of $(\overline{\R},\exp)$ in the $\mathcal L_{\exp}$-structure $\widehat{\mathcal E}$
(which is a model of $T_{\exp}$). For $T=T_{\operatorname{an}}$, the interpretations of the function symbols for restricted analytic functions in the $\mathcal L_{\operatorname{an}}$-structure $\widehat{\mathcal O}$ are induced, in a similar way, by the interpretations of those symbols in ${}^*\mathbf R$. This is an immediate consequence of a variant of Lemma~\ref{lem:induced}, which we formulate and prove below.

For this, suppose that $f\colon X\to \R$ is a $0$-definable continuous function, where $X\subseteq \R^n$ is open.    Let $\hat{f}\colon X(\hat{\mathcal O})\to \hat{\mathcal O}$ be the function defined by the same formula that defines $f$ in $\mathbf R$.  Then for $a\in X(\mathbf R')$ we have $\hat{f}(\hat{a})=\hat{f(a)}$.
Recall that by Lemma~\ref{lem:T-convex closure properties}, $\mathcal O$ is closed under $f$.

\begin{lemma}\label{lem:induced, 2}
Let $a\in X(\mathbf R')$ and $b\in \mathcal O^n$ such that $a-b\in \mathcal o^n$. 
Then $b\in X({}^*\mathbf R)$ and $f(a)-f(b)\in \mathcal o$.  
\end{lemma}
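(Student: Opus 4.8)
The plan is to run the same ``witnessing radius in $\mathbf R'$, then transfer and dominate by $\mathcal o$'' argument twice: once to place $b$ inside the domain, and once to control $f(a)-f(b)$. Throughout I would use that, since $\mathbf R'$ is maximal among elementary substructures of ${}^*\mathbf R$ contained in $\mathcal O$, we have $\mathcal O=\mathbf R'+\mathcal o$ with $\mathcal o=\{x\in{}^*\R:\abs{x}<(R')^{>0}\}$; in particular every element of $\mathcal o$ is smaller in absolute value than every positive element of $R'$, and $\mathbf R'\preceq{}^*\mathbf R$, so any first-order statement with parameters from $R'$ true in $\mathbf R'$ is true in~${}^*\mathbf R$.

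First I would show $b\in X({}^*\mathbf R)$. If $X=\R^n$ this is trivial, so assume $X\neq\R^n$. Since $X$ is open and $0$-definable, the sentence $\forall x\in X\,\exists\delta>0\,\forall y\,\big(\bigwedge_i\abs{y_i-x_i}<\delta\to y\in X\big)$ holds in $\mathbf R$, hence in $\mathbf R'$. Applying it to $a\in X(\mathbf R')$ yields some $\delta\in(R')^{>0}$ for which the instance $\forall y\,\big(\bigwedge_i\abs{y_i-a_i}<\delta\to y\in X\big)$ holds in $\mathbf R'$; as its parameters $a,\delta$ lie in $R'$, it transfers to ${}^*\mathbf R$. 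Now $a-b\in\mathcal o^n$ gives $\abs{a_i-b_i}<\delta$ for each $i$ (because $\delta\in(R')^{>0}$), so $b$ satisfies the hypothesis of the transferred instance, whence $b\in X({}^*\mathbf R)$.

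Next I would show $f(a)-f(b)\in\mathcal o$. By Lemma~\ref{lem:T-convex closure properties} the ring $\mathcal O$ is closed under $f$, so $f(a),f(b)\in\mathcal O$; to prove their difference lies in $\mathcal o$ it suffices, by the description of $\mathcal o$ above, to show $\abs{f(a)-f(b)}<\eta$ for every $\eta\in(R')^{>0}$. Fix such an $\eta$. Continuity of $f$ on $X$ is $0$-definable, so the instance expressing continuity at $a$ with modulus $\eta$ holds in $\mathbf R'$: there is $\delta\in(R')^{>0}$ with $\forall y\in X\,\big(\bigwedge_i\abs{y_i-a_i}<\delta\to\abs{f(y)-f(a)}<\eta\big)$. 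Transferring this $R'$-parametrized instance to ${}^*\mathbf R$ and evaluating at $y=b$ — legitimate since $b\in X({}^*\mathbf R)$ by the previous step and $\abs{a_i-b_i}<\delta$ for all $i$ — yields $\abs{f(a)-f(b)}<\eta$. As $\eta\in(R')^{>0}$ was arbitrary, $f(a)-f(b)\in\mathcal o$.

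I expect the only genuine obstacle to be the first step, the assertion $b\in X({}^*\mathbf R)$: this is exactly where the weakening of the hypothesis of Lemma~\ref{lem:induced} from a function $\R^n\to\R$ to a function on an open $X$ bites, since $b$ need not lie in $X(\mathbf R')$ and a priori could fall outside the domain. The resolution rests entirely on maximality of $\mathbf R'$, through the inequality $\mathcal o<(R')^{>0}$, which guarantees that the neighborhood radius $\delta$ produced inside $\mathbf R'$ is non-infinitesimal relative to $\mathcal o$ and hence dominates the displacement $a-b$. Once this is in place, the second step is the familiar continuity-plus-transfer argument already underlying Lemma~\ref{lem:induced}, and presents no further difficulty.
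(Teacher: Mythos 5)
Your proposal is correct and follows essentially the same route as the paper's proof: both steps pick a witnessing radius $\delta\in(R')^{>0}$ inside $\mathbf R'$ (first for openness of $X$ at $a$, then for continuity of $f$ at $a$ with a given tolerance $\epsilon\in(R')^{>0}$), transfer via $\mathbf R'\preceq{}^*\mathbf R$, and use that $a-b\in\mathcal o^n$ forces $\abs{a-b}<\delta$ because $\mathcal o=\{x\in{}^*\R:\abs{x}<(R')^{>0}\}$. Your write-up is, if anything, slightly more explicit than the paper's about restricting the continuity statement to $y\in X$ and about why $\abs{f(a)-f(b)}<\epsilon$ for all $\epsilon\in(R')^{>0}$ suffices.
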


\begin{proof}
Take $\delta\in (R')^{>0}$ such that $\mathbf{R'}\models \forall x(|x-a|<\delta \rightarrow x\in X)$.
Since  $\mathbf R'$ is an elementary substructure of ${}^*\mathbf R$ and $|a-b|<\delta$, we have
$b\in X({}^*\mathbf R)$. Now
fix $\epsilon \in (R')^{>0}$.  We need to prove that $|f(a)-f(b)|<\epsilon$.  Take $\delta \in (R')^{>0}$ such that $\mathbf{R'}\models \forall x(|x-a|<\delta \rightarrow |f(x)-f(a)|<\epsilon)$.  Since $\mathbf R'\preceq {}^*\mathbf R$ and $|a-b|<\delta$, we obtain $|f(a)-f(b)|<\varepsilon$.
\end{proof}

\noindent
Now suppose that $b\in \mathcal O^n$ is such that $\hat{b}\in X(\hat{\mathcal O})$.  Take $a\in (R')^n$ such that $\hat{a}=\hat{b}$.  Then $a\in X(\mathbf R')$, and $b\in X({}^*\mathbf R)$ and $f(a)-f(b)\in \mathcal o$ by the previous lemma.  Consequently, we see that $\hat{f}(\hat{b})=\hat{f}(\hat{a})=\hat{f(a)}=\hat{f(b)}$.

\medskip
\noindent 
In Section~\ref{sec:differentiability in residue fields} below we show that not only each restricted analytic function on $\R^n$, but {\it each}\/ restricted $C^\infty$-function on $\R^n$ induces a function on $\widehat{\mathcal O}^n$. However, in general, the connection between this induced function and its original seems less tight. (See the question at the end of Section~\ref{sec:differentiability in residue fields}.)

\subsection{Proof of the main theorem}
We are now ready to give a proof of the main theorem from the introduction.
We first observe:

\begin{lemma}\label{lem:embedd, 2}
Let $\mathbf S$ be an elementary extension of $\mathbf R$, and let $\mathbf S'$ be an elementary extension of $\mathbf S$ such that $S$ is cofinal in $S'$. Let $h\colon \mathbf S\to{}^*\mathbf R$ be an elementary embedding such that $h(S)\subseteq\mathcal O$. Suppose the ordered set ${}^*\R$ is $\kappa^+$-saturated where $\kappa=\operatorname{ch}(S')$. Then $h$ extends to an elementary embedding $\mathbf S'\to {}^*\mathbf R$ whose image is contained in $\mathcal O$.
\end{lemma}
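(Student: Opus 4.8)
The plan is to extend $h$ one element at a time using the embedding machinery already developed, exploiting the cofinality hypothesis to keep the image inside $\mathcal O$. The situation is exactly the setup of Lemma~\ref{lem:embedd}, but with the extra demand that the image land in the convex subring $\mathcal O$. Since $\mathbf S'$ is an elementary extension of $\mathbf S$ with $\operatorname{ch}(S')=\kappa$ and ${}^*\R$ is $\kappa^+$-saturated, the bare embedding extension is guaranteed by Lemma~\ref{lem:embedd} applied with $\mathbf M=\mathbf S'$ and $\mathbf N={}^*\mathbf R$; the real content here is the clause $h(S')\subseteq\mathcal O$.

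First I would run the Zorn's lemma argument as in the proof of Lemma~\ref{lem:embedd}: consider the poset of elementary embeddings $g\colon\mathbf A\to{}^*\mathbf R$ extending $h$, where $\mathbf S\preceq\mathbf A\preceq\mathbf S'$ and $g(A)\subseteq\mathcal O$, ordered by extension. A maximal element $g\colon\mathbf A\to{}^*\mathbf R$ exists by Zorn. Suppose toward a contradiction that $A\neq S'$, and pick $x\in S'\setminus A$. By $\kappa^+$-saturation of the ordered set ${}^*\R$ and Lemma~\ref{lem:sat} (noting $\operatorname{ch}(A)\le\operatorname{ch}(S')=\kappa$), the image $g(A^{<x})$ of the cut of $x$ in $A$ is realized by some $y\in{}^*\R$; the map sending $x\mapsto y$ then extends $g$ to an elementary map on $A\cup\{x\}$, and this extends further to an elementary embedding $\mathbf A\langle x\rangle\to{}^*\mathbf R$. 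The key point is that I may choose $y\in\mathcal O$.

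The main obstacle, and the place where the cofinality hypothesis is essential, is precisely showing that the witness $y$ can be taken inside $\mathcal O$ (so that the prime-model extension $\mathbf A\langle x\rangle$ also maps into $\mathcal O$, contradicting maximality). Since $S$ is cofinal in $S'$, for the given $x\in S'$ there is some $s\in S$ with $x<s$ in $\mathbf S'$, hence $A^{<x}\subseteq A^{<s}$, and so the cut of $x$ in $A$ lies below $g(s)=h(s)\in\mathcal O$. Thus when realizing the cut $g(A^{<x})$ I am looking for $y\in{}^*\R$ with $g(A^{<x})<y<h(s)$ (if $x>A$, bounding above by $h(s)$) or otherwise $y$ sits between two elements of $g(A)\subseteq\mathcal O$; either way $y<h(s)\in\mathcal O$, and since $\mathcal O$ is convex and downward closed below its positive elements, $y\in\mathcal O$. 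I would therefore add $h(s)$ as an explicit upper bound in the saturation instance so that the realizing element is forced below an element of $\mathcal O$. Finally, to see that the whole prime model $\mathbf A\langle x\rangle$ maps into $\mathcal O$, I would use that every element of $\mathbf A\langle x\rangle$ is, by o-minimality and the Monotonicity Theorem, bounded in absolute value by a term in $A\cup\{x\}$ over the prime model, hence bounded by an element of $\mathcal O$; convexity of $\mathcal O$ then gives $g'(A\langle x\rangle)\subseteq\mathcal O$. This contradicts the maximality of $\mathbf A$, forcing $A=S'$ and completing the proof.
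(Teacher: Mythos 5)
Your skeleton (Zorn's lemma, realizing cuts via Lemma~\ref{lem:sat}, prime models) is sound, and your choice of witness is fine: since $S\subseteq A$ and $S$ is cofinal and (being closed under negation) coinitial in $S'$, the cut of $x$ in $A$ has elements of $A$ on both sides, so any realization of $g(A^{<x})$ is sandwiched between two elements of $g(A)\subseteq\mathcal O$ and hence lies in $\mathcal O$ by convexity. The genuine gap is your final step: the claim that each element of $\mathbf A\langle x\rangle$ is bounded in absolute value by a term in $A\cup\{x\}$, and that such term values (at arguments in $\mathcal O$) lie in $\mathcal O$. Neither half is justified in the generality in which the lemma is stated ($T$ an arbitrary o-minimal theory, $\mathcal O$ a $T$-convex subring). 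First, definable functions need not be majorized by term functions: in the reduct of $(\overline{\R},\exp)$ in which $\exp$ is named by a predicate for its graph rather than a function symbol, terms are just polynomials, yet $\exp(x)$ belongs to the prime model over $\R\cup\{x\}$. Second, $T$-convex subrings need not be closed under term functions: adjoin to the language of ordered rings a function symbol for $x\mapsto 1/x$ (set to $0$ at $0$); the resulting theory is polynomially bounded, so every convex subring of ${}^*\R$ is $T$-convex, yet no proper convex subring is closed under this term. The fact you actually need --- that $\mathbf A'\langle y\rangle\subseteq\mathcal O$ whenever $\mathbf A'\preceq{}^*\mathbf R$, $A'\subseteq\mathcal O$ and $y\in\mathcal O$ --- is a nontrivial theorem of van den Dries--Lewenberg \cite[(2.13)]{vdDL} (quoted in the paper just before Proposition~\ref{prop:embedd}); it does not follow from the Monotonicity Theorem.

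The larger point, however, is that none of this bookkeeping is necessary, and this is how the paper argues: containment of the image in $\mathcal O$ is automatic for \emph{any} extension of $h$. Apply Lemma~\ref{lem:embedd} with $\mathbf M=\mathbf S'$ and $\mathbf N={}^*\mathbf R$ to extend $h$ to an elementary embedding $\mathbf S'\to{}^*\mathbf R$, with no attempt to control where it lands. Then for $z\in S'$, cofinality of $S$ in $S'$ gives $s\in S$ with $\abs{z}\leq s$, hence $\abs{h(z)}\leq h(s)\in\mathcal O$ (the extension being order-preserving), and convexity of $\mathcal O$ yields $h(z)\in\mathcal O$. This is precisely the observation you already make when choosing the witness $y$; deploying it once at the end, rather than at every step, removes both the constrained Zorn argument and the problematic final step.
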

\begin{proof}
By Lemma~\ref{lem:embedd}, $h$ extends to an elementary embedding $\mathbf S'\to {}^*\mathbf R$, also denoted by~$h$. Since $S$ is cofinal in $S'$ and $h(S)\subseteq\mathcal O$, we have $h(S')\subseteq\mathcal O$.
\end{proof}

\noindent
In connection with the following proposition we note that \cite[(2.13)]{vdDL} shows that if  $\xi\in\mathcal O$ with $\xi>\R$, then $\mathbf R\langle\xi\rangle\subseteq\mathcal O$.

\begin{prop}\label{prop:embedd}
Let $\mathbf S$ be an elementary extension of $\mathbf R$ and $x\in S$, $x>\R$, such that $\mathbf R\langle x\rangle$ is cofinal in $S$.
Suppose the ordered set ${}^*\R$ is $\kappa^+$-saturated, where $\kappa=\ch(S)$. 
Then for each $\xi\in\mathcal O$, $\xi>\R$, there is an elementary embedding $\mathbf S\to {}^*\mathbf R$
which is the identity on~$\R$ and sends $x$ to $\xi$, and whose image is contained in $\mathcal O$
\textup{(}and hence there is an elementary embedding $\mathbf S\to\widehat{\mathcal O}$ which is the identity on $\R$ and sends $x$ to $\widehat{\xi}$\textup{)}.
\end{prop}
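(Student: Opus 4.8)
The plan is to build the desired embedding in two stages: first produce an elementary embedding of the cofinal elementary substructure $\mathbf R\langle x\rangle$ of $\mathbf S$ into ${}^*\mathbf R$ that matches $x$ with $\xi$ and has image inside $\mathcal O$, and then extend it to all of $\mathbf S$ by invoking Lemma~\ref{lem:embedd, 2}. The cofinality hypothesis on $\mathbf R\langle x\rangle$ is precisely what will allow the extension step to keep the image inside $\mathcal O$.

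For the first stage, the key observation is that since $x>\R$ and $\xi>\R$, both $x$ and $\xi$ realize the \emph{same} cut in $\R$, namely the cut whose lower part is all of $\R$. Here $x\in S\setminus\R$, $\xi\in{}^*\R\setminus\R$, and $\mathbf R\preceq\mathbf S$, $\mathbf R\preceq{}^*\mathbf R$. By the o-minimal prime-model fact recalled in Section~\ref{sec:character etc}, there is then a unique isomorphism $\mathbf R\langle x\rangle\to\mathbf R\langle\xi\rangle$ which is the identity on $\R$ and sends $x$ to~$\xi$, where $\mathbf R\langle\xi\rangle$ is taken as an elementary substructure of ${}^*\mathbf R$. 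Composing with the inclusion $\mathbf R\langle\xi\rangle\preceq{}^*\mathbf R$ gives an elementary embedding $h_0\colon\mathbf R\langle x\rangle\to{}^*\mathbf R$ that is the identity on $\R$ and sends $x$ to~$\xi$. Moreover, since $\xi\in\mathcal O$ and $\xi>\R$, the remark preceding the proposition (\cite[(2.13)]{vdDL}) gives $\mathbf R\langle\xi\rangle\subseteq\mathcal O$, so $h_0(\mathbf R\langle x\rangle)\subseteq\mathcal O$.

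For the second stage I would apply Lemma~\ref{lem:embedd, 2} with $\mathbf R\langle x\rangle$ playing the role of $\mathbf S$ there and $\mathbf S$ playing the role of $\mathbf S'$: indeed $\mathbf R\preceq\mathbf R\langle x\rangle\preceq\mathbf S$, the set $\mathbf R\langle x\rangle$ is cofinal in $S$ by hypothesis, $h_0$ is an elementary embedding with $h_0(\mathbf R\langle x\rangle)\subseteq\mathcal O$, and ${}^*\R$ is $\kappa^+$-saturated with $\kappa=\ch(S)$. The lemma then yields an elementary embedding $\mathbf S\to{}^*\mathbf R$ extending $h_0$, with image contained in~$\mathcal O$; being an extension of $h_0$, it is the identity on $\R$ and sends $x$ to~$\xi$. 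For the parenthetical statement, compose this embedding with the residue map $\mathcal O\to\widehat{\mathcal O}$: writing $\mathbf S^\ast\preceq{}^*\mathbf R$ for the image of $\mathbf S$, extend $\mathbf S^\ast$ (by Zorn, as in the van den Dries--Lewenberg theorem) to an elementary substructure $\mathbf R'$ of ${}^*\mathbf R$ maximal among those contained in $\mathcal O$; the residue map restricts to an $\mathcal L$-isomorphism $\mathbf R'\to\widehat{\mathcal O}$ realizing the $T$-structure on $\widehat{\mathcal O}$, so its restriction to $\mathbf S^\ast$ is an elementary embedding $\mathbf S^\ast\to\widehat{\mathcal O}$. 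The composite $\mathbf S\to\widehat{\mathcal O}$ is then elementary, is the identity on $\R$, and sends $x$ to $\widehat{\xi}$.

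The delicate point is really the first stage: everything hinges on recognizing that $x$ and $\xi$ realize one and the same cut in $\R$, which reduces the construction of the initial embedding $h_0$ to o-minimal prime-model uniqueness, together with the fact that this initial image already lands in $\mathcal O$, which is exactly \cite[(2.13)]{vdDL}. Granting these, Lemma~\ref{lem:embedd, 2} does the rest, the cofinality assumption being what guarantees that the full extension to $\mathbf S$ still has image inside $\mathcal O$.
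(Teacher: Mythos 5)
Your proof is correct and follows essentially the same route as the paper's: an isomorphism $\mathbf R\langle x\rangle\to\mathbf R\langle\xi\rangle$ over $\R$ (justified by the shared cut above $\R$ and prime-model uniqueness), landing in $\mathcal O$ by \cite[(2.13)]{vdDL}, then extension via Lemma~\ref{lem:embedd, 2}, and finally the Zorn/residue-map argument for the parenthetical statement. The only difference is that you spell out details the paper compresses into ``by o-minimality'' and ``by the remark preceding the proposition and the previous lemma.''
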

\begin{proof}
Let $\xi\in\mathcal O$ with $\xi>\R$.
By o-minimality, take an isomorphism $h\colon\mathbf R\langle x\rangle \to \mathbf R\langle\xi\rangle$ with $h(r)=r$ for all $r\in\R$ and $h(x)=\xi$. By the remark preceding the proposition and the previous lemma, $h$ extends to an elementary embedding $\mathbf S\to {}^*\mathbf R$, also denoted by~$h$, with $h(S)\subseteq\mathcal O$.
By Zorn, take an elementary substructure $\mathbf R'$ of ${}^*\mathbf R$, maximal subject to the conditions $h(S)\subseteq  R'\subseteq\mathcal O$.
The residue map $\mathcal O\to\widehat{\mathcal O}$ now restricts to an isomorphism
$\mathbf R'\to\widehat{\mathcal O}$ of $\mathcal L$-structures, and pre-composition with $h$ yields the desired elementary embedding $\mathbf S\to\widehat{\mathcal O}$.
\end{proof}

\noindent
The previous proposition immediately yields a more precise form of Theorem~\ref{thm:main theorem} from the introduction:

\begin{theorem}\label{thm:main theorem, 2}
Suppose the ordered set ${}^*\R$ is $\aleph_1$-saturated and let $\mathcal O$ be a convex subring of ${}^*\R$ which is closed under $\exp$.
Then for each $\xi>\R$ in $\mathcal O$ there exists an  
embedding $\mathbb T\to\widehat{\mathcal O}$ of $\mathcal L_{\operatorname{an},\exp}$-structures 
over $\R$ with $x\mapsto\xi$.
\end{theorem}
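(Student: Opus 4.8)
The plan is to obtain the theorem as a direct application of Proposition~\ref{prop:embedd}, instantiated at the o-minimal structure $\mathbf R=\R_{\operatorname{an},\exp}$, with $T=T_{\operatorname{an},\exp}$ and $\mathcal L=\mathcal L_{\operatorname{an},\exp}$, and with $\mathbf S=\mathbb T$ and $x=\ell_0$. First I would record that the hypothesis on $\mathcal O$ makes the apparatus of the previous subsection available. Since $T_{\operatorname{an},\exp}$ is not polynomially bounded but is exponentially bounded, the Examples following Lemma~\ref{lem:T-convex closure properties} show that a convex subring of ${}^*\R$ closed under $\exp$ is precisely a $T_{\operatorname{an},\exp}$-convex subring. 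Hence $\widehat{\mathcal O}$ carries its canonical structure as a model of $T_{\operatorname{an},\exp}$, and by the discussion surrounding Lemmas~\ref{lem:induced} and~\ref{lem:induced, 2} the interpretations of the symbols of $\mathcal L_{\operatorname{an},\exp}$ on $\widehat{\mathcal O}$ are exactly those induced by the restricted analytic functions and $\exp$ on residues; so an elementary embedding into $\widehat{\mathcal O}$ is in particular an embedding of $\mathcal L_{\operatorname{an},\exp}$-structures in the sense of the statement.

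Next I would check that $\mathbf S=\mathbb T$ is genuinely an elementary extension of $\mathbf R$. By Lemma~\ref{lem:T models Tanexp} we have $\mathbb T\models T_{\operatorname{an},\exp}$, and $\mathbb T$ contains $\R_{\operatorname{an},\exp}$ as an $\mathcal L_{\operatorname{an},\exp}$-substructure; since $T_{\operatorname{an},\exp}$ is model-complete, this inclusion is elementary, i.e. $\mathbf R\preceq\mathbb T$. Put $x:=\ell_0\in\mathbb T$, so that $x>\R$. The remaining hypotheses of Proposition~\ref{prop:embedd} are that $\mathbf R\langle x\rangle$ is cofinal in $\mathbb T$ and that ${}^*\R$ is $\kappa^+$-saturated for $\kappa=\ch(\mathbb T)$. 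The saturation condition is immediate: by Corollary~\ref{cor:Esterle} we have $\ch(\mathbb T)=\aleph_0$, so $\kappa^+=\aleph_1$, which is exactly the assumed saturation of ${}^*\R$.

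The one substantive point, and the step I expect to be the main obstacle, is the cofinality of $\mathbf R\langle x\rangle$ in $\mathbb T$. Because $\exp$ is $\emptyset$-definable in $\mathbf R$ and $x\in\mathbf R\langle x\rangle$, all iterated exponentials $\exp_n(x)$ lie in $\mathbf R\langle x\rangle$; thus it suffices to prove that every $f\in\mathbb T$ satisfies $\abs{f}<\exp_n(x)$ for some~$n$. I would establish this by induction on the stage $K_n=\R[[\mathfrak T_n]]$ of the construction, showing that every element of $K_n$ is bounded in absolute value by a finite iterated exponential of $x$, the number of iterations increasing by a bounded amount at each stage. For $K_0=\mathbb L$ this rests on the ordering of $\mathfrak L$: every monomial $\ell_0^{\alpha_0}\cdots\ell_k^{\alpha_k}$ is dominated by a real power $x^\beta$ of $x=\ell_0$, and $x^\beta\prec\exp(x)$ for every $\beta\in\R$, so each $f\in\mathbb L$ satisfies $\abs{f}<\exp(x)=\exp_1(x)$. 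For the inductive step, every monomial of $\mathfrak T_{n+1}=\exp(K_n^{\succ})\cdot\mathfrak T_n$ can be written as $\exp(c)$ with $c\in K_n$ (using $\mathfrak m=\exp(\log\mathfrak m)$ together with $\log(\mathfrak T_n)\subseteq K_n$, and that $\exp$ is a homomorphism on $K_n$); the inductive bound $\abs{c}<\exp_m(x)$ and monotonicity of $\exp$ then give $\exp(c)<\exp_{m+1}(x)$, whence a bound for every element of $K_{n+1}$ by a further iterated exponential. This finite-exponential-height property is essentially built into the construction of $\mathbb T$, but it is exactly what forces $\mathbf R\langle x\rangle$ to be cofinal.

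With every hypothesis verified, Proposition~\ref{prop:embedd} applies for each $\xi\in\mathcal O$ with $\xi>\R$ and produces an elementary embedding $\mathbb T\to\widehat{\mathcal O}$ of $\mathcal L_{\operatorname{an},\exp}$-structures that is the identity on $\R$ and sends $x=\ell_0$ to $\widehat{\xi}$. This is precisely the asserted embedding.
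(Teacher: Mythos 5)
Your proposal is correct and follows essentially the same route as the paper: the paper also deduces the theorem from Proposition~\ref{prop:embedd} applied to $\mathbf R=\R_{\operatorname{an},\exp}$, citing the remark after Lemma~\ref{lem:T-convex closure properties} for $T$-convexity of $\mathcal O$, Corollary~\ref{cor:Esterle} for $\ch(\mathbb T)=\aleph_0$, and the cofinality of $(\exp_n(x))$ in $\mathbb T$. The only difference is that the paper asserts this cofinality simply ``by construction of $\mathbb T$,'' whereas you supply the (correct) stage-by-stage induction writing each monomial of $\mathfrak T_{n+1}$ as $\exp(c)$ with $c\in K_n$.
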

\begin{proof}
We apply the above material to $\mathbf R=\R_{\operatorname{an},\exp}$ (so 
$T=T_{\operatorname{an},\exp}$). By the remark following Lemma~\ref{lem:T-convex closure properties}, each convex subring of ${}^*\R$ which is closed under $\exp$ is a $T$-convex subring of ${}^*\mathbf R$. 
By Corollary~\ref{cor:Esterle}, the ordered set $\mathbb T$ has countable character.
Moreover, by construction of $\mathbb T$, the sequence $(\exp_n(x))$ of iterated exponentials of~$x$ is cofinal in $\mathbb T$.
Hence the theorem follows from the previous proposition.
\end{proof}

\noindent
By Lemmas~\ref{lem:Esterle} and \ref{lem:embedd, 2} and (the proof of) the previous theorem, we obtain:

\begin{cor}\label{cor:main theorem}
Suppose the ordered set ${}^*\R$ is $\aleph_1$-saturated and let $\mathcal O$ be a convex subring of ${}^*\R$ which is closed under $\exp$. Then there is an embedding of $\mathcal L_{\operatorname{an}}$-structures
$\R[[\mathfrak T]]\to\widehat{\mathcal O}$ over $\R$ which restricts to an embedding 
$\mathbb T\to\widehat{\mathcal O}$ of $\mathcal L_{\operatorname{an},\exp}$-structures 
with $x\mapsto\xi$.
\end{cor}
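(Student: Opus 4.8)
The plan is to extend the embedding of Theorem~\ref{thm:main theorem, 2} from $\mathbb T$ to the full Hahn field $\R[[\mathfrak T]]$, working now in the reduct language $\mathcal L_{\operatorname{an}}$ (recall that $\R[[\mathfrak T]]$ carries no total exponential, so we cannot hope for an $\mathcal L_{\operatorname{an},\exp}$-embedding of all of it). Reading the proof of Proposition~\ref{prop:embedd}, what underlies Theorem~\ref{thm:main theorem, 2} is, before post-composition with the residue map, an $\mathcal L_{\operatorname{an},\exp}$-elementary embedding $h\colon\mathbb T\to{}^*\mathbf R$ with $h(\mathbb T)\subseteq\mathcal O$ and $\widehat{h(x)}=\widehat\xi$. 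Passing to $\mathcal L_{\operatorname{an}}$-reducts (with $\mathbf R=\R_{\operatorname{an}}$, $T=T_{\operatorname{an}}$), the map $h$ is $\mathcal L_{\operatorname{an}}$-elementary; note that $\mathcal O$ is $T_{\operatorname{an}}$-convex, since $T_{\operatorname{an}}$ is polynomially bounded.

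I would next verify that $(\mathbf S,\mathbf S')=(\mathbb T,\R[[\mathfrak T]])$ satisfies the hypotheses of Lemma~\ref{lem:embedd, 2}. The group $\mathfrak T=\bigcup_n\mathfrak T_n$ is divisible (each factor $\exp(A_m)$ is a multiplicative copy of the divisible additive group $A_m=K_m^{\succ}$, and $\mathfrak L$ is a multiplicative copy of an $\R$-vector space), so by the result of \cite{DMM94} recalled above, $\R[[\mathfrak T]]\models T_{\operatorname{an}}$. Since $\mathbb T\models T_{\operatorname{an}}$ as well and $\mathbb T\hookrightarrow\R[[\mathfrak T]]$ is an $\mathcal L_{\operatorname{an}}$-embedding of models of the model-complete theory $T_{\operatorname{an}}$, we get $\mathbb T\preccurlyeq\R[[\mathfrak T]]$. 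Moreover $\mathbb T$ is cofinal in $\R[[\mathfrak T]]$: for $0\neq f\in\R[[\mathfrak T]]$ one has $\abs{f}\le c\,\mathfrak d(f)$ for some $c\in\R^{>0}$, and $\mathfrak d(f)\in\mathfrak T\subseteq\mathbb T$. Finally, each $\mathfrak T_n$ (as a subset of $\R[[\mathfrak T_n]]$) has only countable (reverse) well-ordered subsets by the argument of Corollary~\ref{cor:Esterle}, hence so does the countable union $\mathfrak T$; Lemma~\ref{lem:Esterle} then shows that every (reverse) well-ordered subset of $\R[[\mathfrak T]]$ is countable, so $\ch(\R[[\mathfrak T]])=\aleph_0$.

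With $\kappa=\ch(\R[[\mathfrak T]])=\aleph_0$, the assumed $\aleph_1=\kappa^+$-saturation of ${}^*\R$ allows Lemma~\ref{lem:embedd, 2} to extend $h$ to an $\mathcal L_{\operatorname{an}}$-elementary embedding $\tilde h\colon\R[[\mathfrak T]]\to{}^*\mathbf R$ with $\tilde h(\R[[\mathfrak T]])\subseteq\mathcal O$. Then, exactly as in Proposition~\ref{prop:embedd}, I would use Zorn to pick $\mathbf R'\preccurlyeq{}^*\mathbf R$ maximal subject to $\tilde h(\R[[\mathfrak T]])\subseteq R'\subseteq\mathcal O$; by \cite{vdDL} the residue map restricts to an $\mathcal L_{\operatorname{an}}$-isomorphism $\mathbf R'\to\widehat{\mathcal O}$, and composing it with $\tilde h$ produces the desired $\mathcal L_{\operatorname{an}}$-embedding $\R[[\mathfrak T]]\to\widehat{\mathcal O}$ over $\R$.

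The one point demanding care --- and the step I expect to be the main obstacle --- is showing that the restriction of this embedding to $\mathbb T$ is an embedding of $\mathcal L_{\operatorname{an},\exp}$-structures, and indeed coincides with the embedding of Theorem~\ref{thm:main theorem, 2}. Since $\tilde h$ extends $h$, that restriction equals the composite of $h$ with the residue map. The decisive fact is that the interpretations of all function symbols on $\widehat{\mathcal O}$ --- including $\exp$ --- are canonically induced by the residue map and are independent of the choice of $\mathbf R'$, as recorded in the discussion following Lemma~\ref{lem:induced, 2}; consequently the $\mathcal L_{\operatorname{an}}$-reduct of the $\mathcal L_{\operatorname{an},\exp}$-structure on $\widehat{\mathcal O}$ is exactly the $\mathcal L_{\operatorname{an}}$-structure used above. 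Hence the restriction to $\mathbb T$ is simultaneously $\mathcal L_{\operatorname{an}}$- and $\mathcal L_{\operatorname{an},\exp}$-elementary and sends $x$ to $\widehat\xi$, as required.
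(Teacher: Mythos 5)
Your proposal is correct and is exactly the route the paper intends: its one-line proof cites Lemma~\ref{lem:Esterle}, Lemma~\ref{lem:embedd, 2}, and the proof of Theorem~\ref{thm:main theorem, 2}, and your argument unpacks precisely these ingredients (Esterle for $\ch(\R[[\mathfrak T]])=\aleph_0$, divisibility of $\mathfrak T$ plus \cite{DMM94} and model completeness for $\mathbb T\preceq\R[[\mathfrak T]]\models T_{\operatorname{an}}$, cofinality of $\mathbb T$, then the residue-map composition as in Proposition~\ref{prop:embedd}, with the canonicity of the induced functions on $\widehat{\mathcal O}$ handling the $\mathcal L_{\operatorname{an},\exp}$-compatibility on $\mathbb T$). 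The details you supply are exactly the ones the paper leaves implicit.
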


\noindent
We finish this section with another application of the embedding lemma from Section~\ref{sec:character etc}, to Conway's surreal numbers. 

\subsection{Embedding $\mathbb T$ into the surreals}
The surreal numbers form a (proper) class $\text{\bf No}$  equipped with a linear ordering, extending the ordered class
of all ordinal numbers, and also
containing $\R$ as an ordered subset in a natural way.
This ordered class comes with natural algebraic operations  making it a real closed ordered field extension of $\overline{\R}$. The remarkable characteristic property of the ordered field $\text{\bf No}$ is that it is the {\it homogeneous universal}\/ ordered field: every ordered field whose universe is a {\it set}\/ embeds into  $\text{\bf No}$, and any isomorphism between subfields of
$\text{\bf No}$ whose universes are sets extends to an automorphism of $\text{\bf No}$.
We refer to \cite{Gonshor} for the construction and basic properties of the class $\text{\bf No}$.
We recall in particular that each surreal number has a {\it length,}\/ which is an ordinal, and
that the collection $\text{\bf No}(\lambda)$ of surreal numbers of length less than a given ordinal $\lambda$
forms a set, with  $\mathbb R\subseteq \text{\bf No}(\omega+1)$. Also, the ordinal $\omega$, viewed as a surreal, is larger than every real number, viewed as a surreal: $\omega>\R$.

Recall that an $\varepsilon$-number is an ordinal $\lambda$ with the property that $\omega^\lambda=\lambda$.
For example, every uncountable cardinal is an $\varepsilon$-number. The smallest $\varepsilon$-number is
$\varepsilon_0=\sup\{\omega,\omega^\omega,\omega^{\omega^\omega},\dots\}$.
If $\lambda$ is an $\varepsilon$-number, then $\text{\bf No}(\lambda)$ is a subfield of $\text{\bf No}$.
It was already noted by Kruskal (see \cite[Chapter~10]{Gonshor}) that the exponential function on $\R$
extends to an exponential function on the ordered field $\text{\bf No}$. 
In \cite{vdDE}, van den Dries and Ehrlich show that if $\lambda$ is an $\varepsilon$-number then
$\text{\bf No}(\lambda)$ is closed under exponentiation and under taking logarithms of positive elements, and 
the exponential field $(\text{\bf No}(\lambda),\exp)$
is an elementary extension of $(\overline{\mathbb R},\exp)$. In fact, \cite{vdDE} also shows that in this case, the exponential field $(\text{\bf No}(\lambda),\exp)$ can be expanded to an $\mathcal L_{\operatorname{an},\exp}$-structure
making $\text{\bf No}(\lambda)$ an elementary extension of $\mathbb R_{\operatorname{an},\exp}$.
We now obtain a complement to \cite[Theorem~19]{Ehrlich12}:

\begin{cor}\label{cor:surreal}
There is an embedding of $\mathcal L_{\operatorname{an}}$-structures
$\R[[\mathfrak T]]\to \text{\bf No}(\omega_1)$ over $\R$ which restricts to an embedding 
$\mathbb T\to\text{\bf No}(\omega_1)$ of $\mathcal L_{\operatorname{an},\exp}$-structures 
with $x\mapsto\omega$.
\end{cor}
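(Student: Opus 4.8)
The plan is to rerun the proofs of Theorem~\ref{thm:main theorem, 2} and Corollary~\ref{cor:main theorem} with $\text{\bf No}(\omega_1)$ in the role of the residue field $\widehat{\mathcal O}$ and the surreal $\omega$ in the role of $\widehat\xi$. The only features of the target that those arguments use are that it is an appropriate model of $T_{\operatorname{an},\exp}$ (respectively $T_{\operatorname{an}}$) and that its underlying ordered set is $\aleph_1$-saturated, so the first task is to verify these for $\text{\bf No}(\omega_1)$. Since $\omega_1$ is an uncountable cardinal it is an $\varepsilon$-number, so $\text{\bf No}(\omega_1)$ is a subfield of $\text{\bf No}$, and by \cite{vdDE} it may be expanded to an $\mathcal L_{\operatorname{an},\exp}$-structure that is an elementary extension of $\R_{\operatorname{an},\exp}$; in particular its $\mathcal L_{\operatorname{an}}$-reduct is an elementary extension of $\R_{\operatorname{an}}$. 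As an ordered field $\text{\bf No}(\omega_1)$ is dense without endpoints, and by the classical theory of surreal numbers \cite{Gonshor} it is an $\eta_1$-set; hence, by the remark following the definition of $\eta_\alpha$-set in Section~\ref{sec:ctble char}, its underlying ordered set is $\aleph_1$-saturated. This is exactly the hypothesis needed to apply Lemma~\ref{lem:embedd} with $\mathbf N=\text{\bf No}(\omega_1)$ and $\kappa=\aleph_0$.

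Next I would construct the exponential embedding, mimicking the proof of Proposition~\ref{prop:embedd} but feeding $\text{\bf No}(\omega_1)$ directly into Lemma~\ref{lem:embedd} (there is no convex-subring constraint to maintain, so the cofinality step is not needed here). As $\omega>\R$ in $\text{\bf No}(\omega_1)$, the elements $x\in\mathbb{T}$ and $\omega$ realize the same cut over the model $\R_{\operatorname{an},\exp}$, so o-minimality yields an isomorphism $\R_{\operatorname{an},\exp}\langle x\rangle\to\R_{\operatorname{an},\exp}\langle\omega\rangle$ over $\R$ sending $x$ to $\omega$; composing with the inclusion $\R_{\operatorname{an},\exp}\langle\omega\rangle\hookrightarrow\text{\bf No}(\omega_1)$ gives an elementary $\mathcal L_{\operatorname{an},\exp}$-embedding of the elementary substructure $\R_{\operatorname{an},\exp}\langle x\rangle$ of $\mathbb{T}$ into $\text{\bf No}(\omega_1)$. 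Since $\mathbb{T}\models T_{\operatorname{an},\exp}$ by Lemma~\ref{lem:T models Tanexp} and $\ch(\mathbb{T})=\aleph_0$ by Corollary~\ref{cor:Esterle}, Lemma~\ref{lem:embedd} extends this to an $\mathcal L_{\operatorname{an},\exp}$-embedding $\varphi\colon\mathbb{T}\to\text{\bf No}(\omega_1)$ over $\R$ with $x\mapsto\omega$.

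Finally I would extend $\varphi$ across the full Hahn field. Because $\mathfrak T=\bigcup_n\mathfrak T_n$ is divisible, $\R[[\mathfrak T]]$ viewed in $\mathcal L_{\operatorname{an}}$ is an elementary extension of $\R_{\operatorname{an}}$, and model-completeness of $T_{\operatorname{an}}$ makes the inclusion $\mathbb{T}\hookrightarrow\R[[\mathfrak T]]$ an elementary $\mathcal L_{\operatorname{an}}$-embedding. Reading $\varphi$ as an elementary $\mathcal L_{\operatorname{an}}$-embedding of the elementary substructure $\mathbb{T}$ of $\R[[\mathfrak T]]$, and using that $\ch(\R[[\mathfrak T]])=\aleph_0$ --- which follows from Lemma~\ref{lem:Esterle} once one checks, exactly as in the proof of Corollary~\ref{cor:Esterle}, that every (reverse) well-ordered subset of $\mathfrak T$ is countable --- a second application of Lemma~\ref{lem:embedd}, now with $T=T_{\operatorname{an}}$, extends $\varphi$ to an $\mathcal L_{\operatorname{an}}$-embedding $\R[[\mathfrak T]]\to\text{\bf No}(\omega_1)$ over $\R$ that restricts to $\varphi$ on $\mathbb{T}$. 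This is the desired embedding. The main obstacle, and the only genuinely surreal-specific ingredient, is establishing that $\text{\bf No}(\omega_1)$ is an $\eta_1$-set (equivalently, $\aleph_1$-saturated as an ordered set); everything else is a transcription of the arguments already given for $\widehat{\mathcal O}$.
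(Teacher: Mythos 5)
Your proposal is correct and follows essentially the same route as the paper's own proof: an o-minimality-based isomorphism $\R\langle x\rangle\to\R\langle\omega\rangle$ over $\R$, the van den Dries--Ehrlich $\mathcal L_{\operatorname{an},\exp}$-structure on $\text{\bf No}(\omega_1)$, $\aleph_1$-saturation of its underlying order (the paper cites Ehrlich rather than Gonshor for this), and then Lemma~\ref{lem:embedd} together with Lemma~\ref{lem:Esterle} to extend first to $\mathbb T$ and then, in the reduct to $\mathcal L_{\operatorname{an}}$, to $\R[[\mathfrak T]]$. The paper's proof is merely a terser statement of exactly the argument you spell out.
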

\begin{proof}
Let $\R\langle x\rangle$ denote the elementary $\mathcal L_{\operatorname{an},\exp}$-substructure of $\mathbb T$ 
generated by $x$ over $\R$, and similarly let $\R\langle\omega\rangle$ denote the  elementary $\mathcal L_{\operatorname{an},\exp}$-substructure of $\text{\bf No}(\omega_1)$ 
generated by $\omega$ over $\R$. (Note that $\R\langle x\rangle\subseteq\RxLE$ and $\R\langle \omega\rangle\subseteq \text{\bf No}(\varepsilon_0)$.)
By o-minimality, we have an isomorphism $h\colon \R\langle x\rangle \to \R\langle\omega\rangle$ of $\mathcal L_{\operatorname{an},\exp}$-structures with $h(r)=r$ for each $r\in\R$ and $h(x)=\omega$.
The underlying ordered set of $\text{\bf No}(\omega_1)$ is $\aleph_1$-saturated (see \cite[Lemma~1]{Ehrlich88} and 
proof of \cite[Theorem~17]{Ehrlich12}). The claim now follows from Lemmas~\ref{lem:Esterle} and \ref{lem:embedd}.
\end{proof}

\noindent
This corollary leads to a number of natural questions which we do not pursue here, e.g.: what is the smallest $\epsilon$-number $\lambda$ such that there is an embedding 
$\mathbb T\to\text{\bf No}(\lambda)$ of $\mathcal L_{\operatorname{an},\exp}$-structures 
with $x\mapsto\omega$?

\section{Differentiability in Residue Fields, and a Question}\label{sec:differentiability in residue fields}

\noindent
As in the introduction we let $\mathcal O$ be a convex subring of ${}^*\R$, with maximal ideal~$\mathcal o$, residue field $\widehat{\mathcal O}=\mathcal O/\mathcal o$, and residue morphism $x\mapsto\widehat{x}\colon\mathcal O\to\widehat{\mathcal O}$. In this last section of the paper we show that each restricted $C^\infty$-function $\R^n\to\R$ extends to a restricted  $C^\infty$-function $\widehat{\mathcal O}^n\to\widehat{\mathcal O}$ in a natural way.

\begin{notation}
For $X\subseteq\R^n$ we let
$$\mu(X) := \big\{ x\in ({}^*\R_{\operatorname{fin}})^n: \st(x)\in X\bigr\}$$
be the monad of $X$, and we let
$$\widehat\mu(X) := \big\{ \widehat{x}: x\in\mu(X)\big\}\subseteq\widehat{\mathcal O}^n$$
be the image of $\mu(X)$ under $x\mapsto\widehat{x}$. 
So for example, 
let  
\begin{equation}\label{eq:I}
B=(a_1,b_1)\times\cdots\times (a_n,b_n)\quad\text{ where $a_i,b_i\in\R$ with $a_i<b_i$}
\end{equation}
be an open box in $\R$; then
$$\mu(B) = (a_1,b_1)_{{}^*\R_{\operatorname{fin}}}\times\cdots\times (a_n,b_n)_{{}^*\R_{\operatorname{fin}}}, \quad
\widehat\mu(B) = [a_1,b_1]_{\widehat{\mathcal O}}\times\cdots\times [a_n,b_n]_{\widehat{\mathcal O}}.$$
\end{notation}

\noindent
We first show that each infinitely differentiable real-valued function defined on an open set $X\subseteq\R^n$ extends in a natural way to an infinitely differentiable function on the interior 
$\operatorname{int}\widehat{\mu}(X)$
of $\widehat{\mu}(X)$ (taking values in $\widehat{\mathcal O}$). Given $x,y\in {}^*\R^n$ we denote by 
$$[x,y]:=\big\{t x+(1-t)y:t\in [0,1]_{{}^*\R}\big\}$$ 
the line segment in ${}^*\R$ between $x$ and $y$.

\medskip
\noindent
Let first $f\colon B\to\R$ be $C^1$, where $B$ is as in \eqref{eq:I}.

\begin{lemma}
Let $x\in \mu(B)$. Then $f(x)\in{}^*\R_{\operatorname{fin}}$, and if in addition $y\in\mu(B)$ and
 $x-y\in \mathcal o^n$, then $f(x)-f(y)\in\mathcal o$.
\end{lemma}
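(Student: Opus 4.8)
The plan is to reduce both assertions to boundedness of $f$ and of its first partial derivatives on a suitable compact box, followed by transfer.

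First, for finiteness: since $\st(x)\in B$ and $B$ is open, I would choose reals $c_i<\st(x_i)<d_i$ with $[c_i,d_i]\subseteq(a_i,b_i)$ and set $K:=[c_1,d_1]\times\cdots\times[c_n,d_n]$, a compact box contained in $B$. As $f$ is continuous on the compact set $K$, it is bounded there, say $\abs{f}\leq M$ on $K$ for some $M\in\R^{>0}$. Because each $x_i$ is finite with standard part $\st(x_i)\in(c_i,d_i)$ and $x_i-\st(x_i)$ is infinitesimal, we have $c_i<x_i<d_i$, so $x\in {}^*K$; transferring the bound $\abs{f}\leq M$ then gives $\abs{f(x)}\leq M$, whence $f(x)\in{}^*\R_{\operatorname{fin}}$.

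For the second assertion I would first note that since $\mathcal O\supseteq{}^*\R_{\operatorname{fin}}$, its maximal ideal satisfies $\mathcal o\subseteq{}^*\R_{\operatorname{inf}}$; hence $x-y\in\mathcal o^n$ forces each $x_i-y_i$ to be infinitesimal, and in particular $\st(y)=\st(x)$. With $K$ as above, both $x$ and $y$, and by convexity every point of the segment $[x,y]$, lie in ${}^*K$. The key step is then to apply the Mean Value Theorem in the form $f(x)-f(y)=\sum_{i=1}^n \partial_i f(z)\,(x_i-y_i)$ for some $z\in[x,y]$; this is a first-order statement about the standard structure (quantifying over pairs of points of the convex set $K$), so it transfers to ${}^*\R$. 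Since each $\partial_i f$ is continuous, hence bounded, on $K$, transfer gives $\partial_i f(z)\in{}^*\R_{\operatorname{fin}}\subseteq\mathcal O$; as $x_i-y_i\in\mathcal o$ and $\mathcal o$ is an ideal of $\mathcal O$, each summand lies in $\mathcal o$, and therefore so does $f(x)-f(y)$.

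The one point to handle with care — the only real obstacle — is the containment of the whole segment $[x,y]$ in ${}^*K$ (equivalently, keeping the argument of $f$ and its derivatives bounded away from $\partial B$): this is exactly what the hypotheses $x,y\in\mu(B)$ and $x-y\in\mathcal o^n$ guarantee, since they force $x$ and $y$ to share the interior standard part $\st(x)$. A cosmetic alternative that avoids even naming the point $z$ is to observe that $f$ is Lipschitz on the convex compact box $K$ with some real constant $L=\sup_K\dabs{\nabla f}$ and to transfer the inequality $\abs{f(u)-f(v)}\leq L\,\dabs{u-v}$; but the Mean Value Theorem route makes the bookkeeping with the ideal $\mathcal o$ most transparent.
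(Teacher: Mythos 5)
Your proposal is correct and takes essentially the same route as the paper: the Mean Value Theorem applied on the segment $[x,y]$, finiteness of the gradient at the intermediate point $z$ (which has $\st(z)=\st(x)\in B$), and then the ideal property of $\mathcal o$ in $\mathcal O\supseteq{}^*\R_{\operatorname{fin}}$ to conclude $f(x)-f(y)\in\mathcal o$. The only cosmetic difference is that you obtain the needed finiteness bounds by restricting to a compact box $K\subseteq B$ and transferring a standard bound, whereas the paper invokes nonstandard continuity directly, via $f(x)\approx f(\st(x))$ and $f'(z)\approx f'(\st(z))$.
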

\begin{proof}
We have $f(x)\approx f(\st(x))$, hence $f(x)\in{}^*\R_{\operatorname{fin}}$.
Now let $y\in\mu(B)$ and $x-y\in \mathcal o^n$. By the Mean Value Theorem, there is $z\in [x,y]$ such that $f(y)-f(x)=f'(z)\cdot (y-x)$.
However, $\st(z)=\st(x)\in B$, and $f'$ is continuous, so $f'(z)\approx f'(\st(z))$ and hence
$f(y)-f(x)=f'(z)\cdot(y-x)$ where $f'(z)\in ({}^*\R_{\operatorname{fin}})^n$ and $y-x\in \mathcal o^n$. Thus  $f(y)-f(x)\in\mathcal o$ as required.
\end{proof}

\noindent
The previous lemma allows us to define a function 
$$\widehat{f}\colon \widehat{\mu}(B)\to\widehat{\mathcal O},\qquad \widehat{f}(\widehat{x}):=\widehat{f(x)}\quad\text{ for each $x\in \mu(B)$.}$$
Next we show:

\begin{lemma}\label{lem:hat f, 1}
The function $\widehat{f}$ is continuous.
\end{lemma}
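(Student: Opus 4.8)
The plan is to prove continuity of $\widehat f$ at each point of its domain by first extracting a Lipschitz estimate with a \emph{real} constant on a ball around a standard point, and then transporting that estimate through the residue map. Fix $\widehat x_0\in\widehat\mu(B)$, choose a representative $x_0\in\mu(B)$, and set $c:=\st(x_0)\in B$. Since $B$ is open, every point of $\widehat\mu(B)$ has standard part strictly inside $B$, so there is genuine room around $c$; recall also that $\widehat f(\widehat x)=\widehat{f(x)}$ is well defined by the previous lemma.

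First I would work in the reals. Because $B$ is open and $f$ is $C^1$, I can pick a real $r>0$ with $\{u\in\R^n:\dabs{u-c}\le r\}\subseteq B$ and a real $M\ge 0$ such that $\abs{f(u)-f(v)}\le M\dabs{u-v}$ for all $u,v$ in this closed ball (such an $M$ exists by the Mean Value Theorem, as in the preceding lemma, e.g. a bound on $\dabs{f'}$ over the ball). This is a first-order statement about $\mathbf R$, so by transfer the \emph{same} inequality holds for all $u,v$ in the corresponding ${}^*$-ball $\{z\in{}^*\R^n:\dabs{z-c}\le r\}$, on which ${}^*f$ is defined.

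Next I would push the bound to the residue field. Given $\varepsilon\in\widehat{\mathcal O}^{>0}$, put $\delta:=\min\!\big(r/2,\ \varepsilon/(M+1)\big)\in\widehat{\mathcal O}^{>0}$ and take any $\widehat y\in\widehat\mu(B)$ with $\dabs{\widehat y-\widehat x_0}<\delta$, lifted to $y\in\mu(B)$. Since the residue map is an order-preserving ring morphism, $\dabs{\widehat y-\widehat x_0}=\widehat{\dabs{y-x_0}}$, and $\delta\le r/2$ forces $\dabs{y-x_0}<r/2$ in ${}^*\R$; as $\dabs{x_0-c}$ is infinitesimal, both $x_0$ and $y$, and hence the segment $[x_0,y]$, lie in the ${}^*$-ball of radius $r$ about $c$, where the transferred Lipschitz bound applies. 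Taking residues in $\abs{f(y)-f(x_0)}\le M\dabs{y-x_0}$ then gives
$$\abs{\widehat f(\widehat y)-\widehat f(\widehat x_0)}=\widehat{\abs{f(y)-f(x_0)}}\le M\,\dabs{\widehat y-\widehat x_0}\le M\delta<\varepsilon,$$
using $\widehat f(\widehat y)=\widehat{f(y)}$ and $\widehat f(\widehat x_0)=\widehat{f(x_0)}$. As $\varepsilon$ was arbitrary, $\widehat f$ is continuous at $\widehat x_0$, and hence everywhere on $\widehat\mu(B)$.

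The main obstacle to keep in mind is that $\varepsilon$ and $\delta$ range over $\widehat{\mathcal O}^{>0}$, which contains elements infinitesimal (and infinite) relative to $\R$, so one cannot simply invoke ordinary continuity of $f$ at the real point $c$. The decisive move is to secure a \emph{real} Lipschitz constant $M$ on a fixed real-radius ball \emph{before} passing to residues: a single such estimate then controls $\widehat f$ for \emph{all} $\varepsilon$ at once, including the non-archimedean ones. The only point requiring care is checking that an arbitrary lift $y$ of $\widehat y$ actually lands in the ${}^*$-ball, so that both the transferred bound and the identity $\widehat f(\widehat y)=\widehat{f(y)}$ are available.
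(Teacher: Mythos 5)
Your proof is correct and takes essentially the same route as the paper's: both arguments rest on fixing a \emph{real} bound $M$ for the derivative of $f$ on a compact neighborhood of $\st(x_0)$ inside $B$ and converting it, via the Mean Value Theorem, into the estimate $\abs{f(y)-f(x_0)}\leq M\abs{y-x_0}$, so that $\delta$ can be taken proportional to $\varepsilon$. The only difference is bookkeeping: the paper applies the MVT inside ${}^*\R$ after first reducing to the case of infinitesimal $\varepsilon$ (which keeps the intermediate point near-standard, hence inside the compact box where $f'$ is bounded), whereas you establish the Lipschitz inequality over $\R$ and transfer it, capping $\delta$ at $r/2$ instead of normalizing $\varepsilon$.
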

\begin{proof}
Fix $x\in\mu(B)$. Let $\varepsilon\in\mathcal O$ with $\varepsilon>\mathcal o$. We need to find $\delta\in\mathcal O$ with $\delta>\mathcal o$ such that for all $y\in\mu(B)$ such that $\abs{x-y}<\delta$ we have $\abs{f(x)-f(y)}<\varepsilon$.
For this, we may assume that $\varepsilon\in{}^*\R_{\operatorname{inf}}$. Take a closed box $C\subseteq B$ whose interior contains $\st(x)$,  and $M\in\N^{>0}$  such that $\abs{f'(y)}\leq M$ for all $y\in B$. We claim that $\delta=\frac{\varepsilon}{M}$ works. Indeed, let 
$y\in\mu(B)$ satisfy $\abs{x-y}<\delta$. Take $z\in [x,y]$ such that $f(y)-f(x)=f'(z)\cdot (y-x)$. Then $y-x\in ({}^*\R_{\operatorname{inf}})^n$ and thus $\st(z)=\st(x)$, so $z\in {}^*C$ and $\abs{f'(z)}\leq M$. Hence  $\abs{f(y)-f(x)}\leq\abs{f'(z)}\cdot \abs{y-x}<M\cdot\delta=\varepsilon$.
\end{proof}

\begin{lemma}\label{lem:hat f, 2}
Suppose $f$ is $C^2$. Then the restriction of $\widehat{f}$ to $\operatorname{int}\widehat{\mu}(B)$ is differentiable with derivative $\widehat{f'}$.
\end{lemma}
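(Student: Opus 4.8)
The plan is to derive differentiability from the second-order Taylor estimate for the $C^2$ function $f$: I would prove the estimate on a suitable real box, transfer it to ${}^*\mathbf R$, and then push the resulting inequality into $\widehat{\mathcal O}$ along the residue map. Before starting I would record that $\widehat{f'}$ is well defined: since $f$ is $C^2$, each $\partial_i f$ is $C^1$ on $B$, so the lemma used to define $\widehat f$ applies to $\partial_i f$ and gives $\widehat{\partial_i f}(\widehat z)=\widehat{\partial_i f(z)}$ for $z\in\mu(B)$; setting $\widehat{f'}=(\widehat{\partial_1 f},\dots,\widehat{\partial_n f})$ one has $f'(z)\in\mathcal O^n$ and $\widehat{f'}(\widehat z)=\widehat{f'(z)}$ for all $z\in\mu(B)$.

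Concretely, I would fix $\widehat x\in\operatorname{int}\widehat\mu(B)$ and a lift $x\in\mu(B)$, so that $\st(x)\in B$ and each residue $\widehat{x_i}$ is infinitely close (relative to $\R$) to the real number $\st(x_i)\in(a_i,b_i)$, because $x_i-\st(x_i)\in{}^*\R_{\operatorname{inf}}$. Choosing a real $r>0$ with $C:=\prod_i[\st(x_i)-r,\st(x_i)+r]\subseteq B$ and an $M\in\N^{>0}$ bounding the Hessian of $f$ on the convex set $C$, the estimate $\abs{f(v)-f(u)-f'(u)\cdot(v-u)}\le\tfrac{M}{2}\abs{v-u}^2$ for $u,v\in C$ is a first-order statement about $\mathbf R$ in the functions $f,f'$ and transfers to all $u,v\in{}^*C$. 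For $\widehat h$ with $\abs{\widehat h}\le r/2$ I would take a lift $y\in\mu(B)$ of $\widehat x+\widehat h$ and set $h'=y-x\in\mathcal O^n$, so $\widehat{h'}=\widehat h$; a short standard-part computation shows $\st(y)\in C$, hence $x,y\in{}^*C$. Applying the transferred estimate with $u=x$, $v=y$, and then taking residues (the residue map being an order-preserving ring homomorphism on $\mathcal O$, under which both sides of the inequality lie), I would obtain
$$\bigl|\widehat f(\widehat x+\widehat h)-\widehat f(\widehat x)-\widehat{f'}(\widehat x)\cdot\widehat h\bigr|\ \le\ \tfrac{M}{2}\,\abs{\widehat h}^2 .$$
Given $\widehat\varepsilon\in\widehat{\mathcal O}^{>0}$, the choice $\widehat\delta=\min\{2\widehat\varepsilon/M,\,r/2\}$ then witnesses differentiability of $\widehat f$ at $\widehat x$ with derivative $\widehat{f'}(\widehat x)$.

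The step I expect to be the main obstacle is the domain bookkeeping needed to apply the transferred estimate: one must ensure that both $x$ and the lift $y$ of $\widehat x+\widehat h$ genuinely lie in ${}^*C$, which is where the interior hypothesis on $\widehat x$ and the bound $\abs{\widehat h}\le r/2$ enter, and one must check that every quantity lies in $\mathcal O$ (so that residues may be taken and the order-preservation of the residue map used). It is also worth emphasizing why $C^2$ is essential here: the quadratic remainder $\tfrac{M}{2}\abs{v-u}^2$ survives passage to $\widehat{\mathcal O}$ as the clean bound $\tfrac{M}{2}\abs{\widehat h}^2$, whereas the merely $o(\abs{v-u})$ remainder available from $C^1$ alone would not translate into the required infinitesimal control after taking residues.
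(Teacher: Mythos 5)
Your proposal is correct, but it takes a genuinely different route from the paper's. The paper never leaves ${}^*\R$: it fixes a lift $x\in\mu(B)$, works one coordinate at a time, and shows that for each $\varepsilon>\mathcal o$ the choice $\delta=\varepsilon/M$ (with $M$ bounding the second derivatives on a closed box $C$ around $\st(x)$) forces the difference quotient $\bigl(f(x+he_i)-f(x)\bigr)/h$ to lie within $\varepsilon$ of $\frac{\partial f}{\partial x_i}(x)$ whenever $0<\abs{h}<\delta$; the key tool is a double application of the Mean Value Theorem (first to $f$ along $[x,x+he_i]$, then to $\partial f/\partial x_i$), and differentiability of $\widehat f$ is read off from this nonstandard $\varepsilon$--$\delta$ statement. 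You instead prove Fr\'echet differentiability in one stroke: transfer the second-order Taylor estimate $\abs{f(v)-f(u)-f'(u)\cdot(v-u)}\leq\frac{M}{2}\abs{v-u}^2$ to ${}^*C$ and push the inequality through the residue map, arguing with $\widehat\varepsilon$ and $\widehat\delta$ directly in $\widehat{\mathcal O}$. Both arguments rest on the same ingredients---a compact box with a second-derivative bound, plus transfer (the paper's use of the MVT in ${}^*\R$ is also an instance of transfer)---but yours delivers a little more: as written, the paper's proof establishes only the existence of the partial derivatives of $\widehat f$ (equal to $\widehat{\partial f/\partial x_i}$), so its stated conclusion still implicitly invokes the classical fact that continuous partials (continuity coming from Lemma~\ref{lem:hat f, 1} applied to the components of $f'$) imply differentiability, whereas your quadratic bound $\frac{M}{2}\abs{\widehat h}^2$ yields the full statement with an explicit modulus. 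The price is exactly the bookkeeping you flag: one must lift $\widehat h$ to $h'\in\mathcal O^n$, set $y:=x+h'$, check $\st(y)\in C$ so that $y\in\mu(B)$ and the transferred estimate applies (note that your phrase ``take a lift $y\in\mu(B)$ of $\widehat x+\widehat h$'' presupposes what this standard-part computation proves), and confirm that all terms lie in $\mathcal O$ before taking residues---steps the paper's upstairs formulation avoids entirely.
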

\begin{proof}
Fix $x\in \mu(B)$. Let $e_1,\dots,e_n\in\R^n$ be the standard basis vectors of $\R$, and fix $i\in\{1,\dots,n\}$.
Let $\varepsilon\in{}^*\R_{\operatorname{inf}}$ with $\varepsilon>\mathcal o$; we need to find $\delta\in\mathcal O$ with $\delta>\mathcal o$ such that for each $h\in{}^*\R$ with $0<\abs{h}<\delta$ we have
$$\left|\frac{f(x+he_i)-f(x)}{h}-\frac{\partial f}{\partial x_i}(x)\right| < \varepsilon.$$
Again, let $C$ be a closed box contained in $B$ such that $\operatorname{st}(x)$ is in the interior of~$C$. Let $M\in\N^{>0}$ be such that $\abs{(\frac{\partial f}{\partial x_i})'(y)}\leq M$ for all $y\in C$. We claim that $\delta=\frac{\varepsilon}{M}$ works. Suppose  $h\in{}^*\R$, $0<\abs{h}<\delta$. Take $y\in [x,x+h]$ such that $\frac{f(x+he_i)-f(x)}{h}=\frac{\partial f}{\partial x_i}(y)$. We need $\abs{\frac{\partial f}{\partial x_i}(y)-\frac{\partial f}{\partial x_i}(x)}<\varepsilon$. Take $z\in [x,y]$ such that $\frac{\partial f}{\partial x_i}(y)-\frac{\partial f}{\partial x_i}(x)=(\frac{\partial f}{\partial x_i})'(z)\cdot (y-x)$. Since $\st(z)=\st(x)$ we have $z\in {}^*C$ and thus $\abs{(\frac{\partial f}{\partial x_i})'(z)}\cdot\abs{y-x}<M\cdot\delta=\varepsilon$, as required.
\end{proof}

\begin{cor}\label{cor:hat f}
Let $N\in\N$.
If $f$ is $C^{N+1}$, then $\widehat{f}\upharpoonright\operatorname{int}\widehat{\mu}(B)$ is $C^N$. In particular, if $f$ is~$C^\infty$, then $\widehat{f}\upharpoonright\operatorname{int}\widehat{\mu}(B)$ is $C^\infty$.
\end{cor}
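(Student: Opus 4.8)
The plan is to prove the first assertion by induction on $N$, with Lemmas~\ref{lem:hat f, 1} and~\ref{lem:hat f, 2} serving as the base case and the engine of the inductive step, respectively. The underlying bookkeeping is the standard characterization of smoothness: a function (into the ordered field $\widehat{\mathcal O}$, with differentiation understood in the sense made precise in Lemma~\ref{lem:hat f, 2}) is $C^{N+1}$ precisely when it is differentiable and each of its first-order partial derivatives is $C^N$. For the base case $N=0$, if $f$ is $C^1$ then Lemma~\ref{lem:hat f, 1} already gives that $\widehat f$ is continuous on all of $\widehat\mu(B)$, so its restriction to $\operatorname{int}\widehat\mu(B)$ is continuous, i.e.\ $C^0$.

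For the inductive step I would assume the claim for a given $N$ and let $f$ be $C^{N+2}$. Since $N+2\geq 2$, the function $f$ is in particular $C^2$, so Lemma~\ref{lem:hat f, 2} applies and shows that $\widehat f\upharpoonright\operatorname{int}\widehat\mu(B)$ is differentiable with derivative $\widehat{f'}$; concretely, $\frac{\partial\widehat f}{\partial x_i}=\widehat{\partial f/\partial x_i}$ on $\operatorname{int}\widehat\mu(B)$ for each $i$. Now each partial derivative $\partial f/\partial x_i\colon B\to\R$ is itself $C^{N+1}$, so the inductive hypothesis, applied to $\partial f/\partial x_i$ in place of $f$ (for the same box $B$), shows that $\widehat{\partial f/\partial x_i}\upharpoonright\operatorname{int}\widehat\mu(B)$ is $C^N$. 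Hence every first-order partial derivative of $\widehat f\upharpoonright\operatorname{int}\widehat\mu(B)$ exists and is $C^N$, so $\widehat f\upharpoonright\operatorname{int}\widehat\mu(B)$ is $C^{N+1}$, completing the induction. The $C^\infty$ statement then follows at once, since a $C^\infty$ function is $C^{N+1}$ for every $N$, whence $\widehat f\upharpoonright\operatorname{int}\widehat\mu(B)$ is $C^N$ for every $N$.

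I do not expect any genuine obstacle, since all of the analytic content has already been isolated in the two lemmas. The single point that must be invoked with care is the identity $\frac{\partial\widehat f}{\partial x_i}=\widehat{\partial f/\partial x_i}$, i.e.\ that forming residues commutes with differentiation on the interior; this is exactly what the ``with derivative $\widehat{f'}$'' clause of Lemma~\ref{lem:hat f, 2} records, and it is what permits the scalar inductive hypothesis to be fed back in through the components of the gradient. One should also keep in mind that the construction $g\mapsto\widehat g$ is tied to the fixed box $B$, so the inductive hypothesis is applied to the partials over this same $B$, which causes no difficulty.
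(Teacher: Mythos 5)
Your proposal is correct and follows essentially the same argument as the paper: induction on $N$ with Lemma~\ref{lem:hat f, 1} as the base case and Lemma~\ref{lem:hat f, 2} providing the identity $(\widehat{f})'=\widehat{f'}$ on $\operatorname{int}\widehat{\mu}(B)$, after which the inductive hypothesis is applied to the derivative. The only cosmetic difference is that the paper applies the inductive hypothesis to the vector-valued $f'\colon B\to\R^n$ at once, whereas you work componentwise with the partials $\partial f/\partial x_i$, which amounts to the same thing.
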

\begin{proof}
By induction on $N$, where the case $N=0$ holds by Lemma~\ref{lem:hat f, 1}. Now suppose that $f$ is $C^{N+2}$. Then $f'\colon B\to\R^n$ is $C^{N+1}$, so $\widehat{f'}\upharpoonright\operatorname{int}\widehat{\mu}(B)$ is $C^N$ by induction. But $\widehat{f'}=(\widehat{f})'$ on $\operatorname{int}\widehat{\mu}(B)$, by Lemma~\ref{lem:hat f, 2}, so $\widehat{f}\upharpoonright\operatorname{int}\widehat{\mu}(B)$ is $C^{N+1}$.
\end{proof}

\noindent
Let $X$ be an arbitrary open subset of $\R^n$ and let $f\colon X\to\R$ be $C^1$. Then $X$ is a union of open boxes $B$ of the form \eqref{eq:I}, and $\widehat{\mu}(X)$ is then the union of the corresponding sets $\widehat{\mu}(B)$. Hence 
by the above, we can define a function 
$$\widehat{f}\colon \widehat{\mu}(X)\to\widehat{\mathcal O},\qquad \widehat{f}(\widehat{x}):=\widehat{f(x)}\quad\text{ for each $x\in \mu(X)$,}$$
and then the analogues of Lemmas~\ref{lem:hat f, 1} and \ref{lem:hat f, 2}, and Corollary~\ref{cor:hat f} hold (with $X$ in place of $B$).

\medskip
\noindent
Let now $I:=[-1,1]$, and for an ordered field extension $K$ of $\R$ write 
$$I(K):=\{x\in K:-1\leq x\leq 1\}.$$ 
Let
$\mathcal C^\infty$ be the collection of all functions $I^n\to\R$, for varying $n$, which extend to a $C^\infty$-function on a neighborhood of $I^n$, and let $\mathcal C\subseteq\mathcal C^\infty$.
A {\bf restricted $\mathcal C$-function} is a function $\R^n\to\R$ which on $I^n$ agrees with a function in $\mathcal C$ and which is $0$ on $\R^n\setminus I^n$.
Let $\mathcal L_{\mathcal C}$ be the language of ordered rings augmented by an $n$-ary function symbol for each restricted $\mathcal C$-function $\R^n\to\R$; we use the same letter to denote the function from $\mathcal C$ and its corresponding function symbol.
We may expand the ordered field~$\widehat{\mathcal O}$ to an $\mathcal L_{\mathcal C}$-structure by interpreting each function symbol $f$, where  
$f\colon \R^n\to\R$ is a restricted $\mathcal C$-function, by the function $f^{\widehat{\mathcal O}}\colon \widehat{\mathcal O}^n\to\widehat{\mathcal O}$ given by
$$f^{\widehat{\mathcal O}}(\widehat{x}) = 
\begin{cases}
\widehat{f}(\widehat{x}) 	& \text{if $\widehat{x}\in I(\widehat{\mathcal O})^n$,} \\
0							& \text{otherwise.}
\end{cases}
$$
Note that this definition makes sense by the discussion above, since $f$ agrees with a $C^\infty$-function on an open neighborhood of $I^n$.
Also note that $\R$ is the underlying set of a substructure $\R_{\mathcal C}$ of the $\mathcal L_{\mathcal C}$-structure~$\widehat{\mathcal O}$.

\begin{question}
Are there natural conditions on $\mathcal C$ which ensure that the $\mathcal L_{\mathcal C}$-structure $\R_{\mathcal C}$ is an elementary substructure of $\widehat{\mathcal O}$?
\end{question}

\noindent
One such natural condition is that $\R_{\mathcal C}$ be o-minimal: in this case, $\widehat{\mathcal O}$
can be made into an elementary extension of $\R_{\mathcal C}$ as explained in the previous section, 
and $f^{\widehat{\mathcal O}}$ then agrees with the interpretation of the function symbol $f$ in this 
$\mathcal L_{\mathcal C}$-structure, by Lemma~\ref{lem:induced, 2} and the discussion surrounding it.
Example of families $\mathcal C$ which make $\R_{\mathcal C}$ o-minimal are, of course, the family consisting of all  restrictions to the unit cubes $I^n$ of analytic functions on neighborhorhoods of $I^n$ (so $\R_{\mathcal C}=\R_{\operatorname{an}}$), or the family of  all  restrictions to the unit cubes of $C^\infty$-functions associated to a given Denjoy-Carleman class~\cite{RSW}. Also, Le Gal~\cite{Le Gal} and Grigoriev~\cite{Grigoriev}  have shown that the expansion of the real field by a {\it generic}\/ restricted $C^\infty$-function is o-minimal.

\medskip
\noindent
In general, however, $\R_{\mathcal C}$ is not an elementary substructure of $\widehat{\mathcal O}$, as was pointed out to us by Dave Marker:

\begin{example}\label{lem:dave}
Suppose that $\N$ is definable in $\R_{\mathcal C}$, say by
the $\mathcal L_{\mathcal C}$-formula  $\varphi(x)$, possibly involving parameters.
(This hypothesis is satisfied if $\mathcal C$ contains a restricted $C^\infty$-function with an infinite discrete zero set which is in semialgebraic bijection with~$\N$, such as the restricted $C^\infty$-function $s$ with $s(0)=0$ and
$s(x) = e^{-1/x^2}\sin(1/x)$ for $x\in [-1,1]\setminus \{0\}$.)
Suppose further that $\mathcal O$ is the convex hull of $\R[\xi]$ in ${}^*\R$, where $\xi\in {}^*\R$ is positive infinite.  We then claim that $\R_{\mathcal C}$ is not an elementary substructure of $\widehat{\mathcal O}$.  Indeed, suppose that $\widehat{\mathcal O}\models \varphi(\alpha)\wedge\alpha>\xi$.  Let $\psi(x,y)$ define the graph of the function $x\mapsto 2^x\colon\mathbb{N}\to \mathbb{N}$ in $\mathbb R_{\mathcal C}$.  Then if $\widehat{\mathcal O}\models \psi(\alpha,\beta)$, then by elementarity, we have $\widehat{\mathcal O}\models \beta>\hat{\xi}^n$, for each $n$, contradicting the fact that $\hat \xi, \hat \xi^2,\ldots$ is cofinal in $\widehat{\mathcal O}$. 
\end{example}

\noindent
Of course, once we can define the set $\N$  in $\R_{\mathcal C}$,   we immediately define all projective sets; in particular, we define all $C^\infty$-functions, making the preceding example a very wild one.
On the other hand, even in this situation, we sometimes obtain an {\it existentially closed}\/ substructure:

\begin{lemma}\label{lem:tressl}
Suppose ${}^*\R$ is $\mathfrak c^+$-saturated. Then
there is a convex subring $\mathcal O$ of ${}^*\R$ with $\R_{\operatorname{fin}} \subsetneq \mathcal O\subsetneq {}^*\R$, such that $\R_{\mathcal C^\infty}\preceq_1 \widehat{\mathcal O}$.
\end{lemma}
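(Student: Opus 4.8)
The plan is to choose $\mathcal O$ to be a convex subring closed under \emph{all} (unrestricted) $C^\infty$ functions $\R\to\R$, and then to reflect existential formulas from $\widehat{\mathcal O}$ down to $\R$ by a compactness argument on the unit cube. Concretely, I would fix $\xi\in{}^*\R$ with $\xi>\R$ and set $\mathcal O:=\{x\in{}^*\R:\abs{x}\le h(\xi)\text{ for some }C^\infty\text{ function }h\colon\R\to\R\}$. One checks directly that $\mathcal O$ is a convex subring with $\R_{\operatorname{fin}}\subsetneq\mathcal O$ (as $\xi\in\mathcal O$) which is closed under every $C^\infty$ function: if $\abs{x}\le h(\xi)$ and $k\in C^\infty$, then $\abs{k(x)}\le \widetilde K(h(\xi))$ for a $C^\infty$ majorant $\widetilde K$ of $t\mapsto\sup_{\abs s\le t}\abs{k(s)}$, so $k(x)\in\mathcal O$. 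Since $\{h(\xi):h\in C^\infty\}$ has size $\mathfrak c$ and no upper bound, $\mathfrak c^+$-saturation yields $\eta>\mathcal O$, so $\mathcal O\subsetneq{}^*\R$. This is the only use of the saturation hypothesis.

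To prove $\R_{\mathcal C^\infty}\preceq_1\widehat{\mathcal O}$ it suffices to show that every primitive existential $\mathcal L_{\mathcal C^\infty}$-formula $\exists\bar y\,\bigl(\bigwedge_i t_i(\bar y)=0\wedge\bigwedge_j s_j(\bar y)>0\bigr)$ with parameters in $\R$ that holds in $\widehat{\mathcal O}$ already holds in $\R$ (the general case reduces to this, distributing $\exists$ over the disjunctive normal form of a quantifier-free matrix and using that $\widehat{\mathcal O}$ is an ordered field to eliminate negated atoms). Fix such a formula and a witness $\widehat{\bar\beta}$, $\bar\beta\in\mathcal O^n$. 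Using the facts from Section~\ref{sec:differentiability in residue fields} that each restricted symbol $f$ induces on $\widehat{\mathcal O}$ the function $\widehat x\mapsto\widehat{f(x)}$ on $I(\widehat{\mathcal O})^n$ and the value $0$ off the cube, a finite case distinction over which argument of each restricted symbol has residue in $I(\widehat{\mathcal O})$ lets me assume all relevant arguments lie in the cube; the conditions on $\widehat{\bar\beta}$ then read $t_i(\bar\beta)\in\mathcal o$ and $s_j(\bar\beta)>0$ with $s_j(\bar\beta)\notin\mathcal o$ in ${}^*\R$. (Any argument whose residue falls outside the cube forces the corresponding restricted symbol to vanish identically and produces a real solution immediately.)

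Now put $F:=\sum_i t_i^2$, so $F(\bar\beta)\in\mathcal o$, and let $\epsilon:=\min_j s_j(\bar\beta)$; then $\epsilon>0$ and $\epsilon\notin\mathcal o$, so $1/\epsilon\in\mathcal O$. Suppose, toward a contradiction, that the formula fails in $\R$, i.e.\ no point of $[-1,1]^n$ satisfies $F=0\wedge\bigwedge_j s_j>0$. For $\epsilon'>0$ set $m(\epsilon'):=\inf\{F(\bar y):\bar y\in[-1,1]^n,\ s_j(\bar y)\ge\epsilon'\ \forall j\}$; by compactness of the cube and continuity the infimum is attained when the constraint set is nonempty, so the assumption forces $m(\epsilon')>0$ for every $\epsilon'>0$. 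I then choose a positive $C^\infty$ function $\eta_*$ with $0<\eta_*(\epsilon')\le m(\epsilon')$, so that $\R\models\forall\bar y\in[-1,1]^n\,\forall\epsilon'>0\,\bigl(\bigwedge_j s_j(\bar y)\ge\epsilon'\to F(\bar y)\ge\eta_*(\epsilon')\bigr)$. Transferring this universal statement to ${}^*\R$ (using ${}^*\mathbf R\succeq\mathbf R$) and instantiating $\bar y:=\bar\beta$, $\epsilon':=\epsilon$ gives $F(\bar\beta)\ge\eta_*(\epsilon)$. But $1/\eta_*(\epsilon)=\Psi(1/\epsilon)$ for the $C^\infty$ function $\Psi(u):=1/\eta_*(1/u)$ (extended smoothly), so closure of $\mathcal O$ under $C^\infty$ functions together with $1/\epsilon\in\mathcal O$ gives $1/\eta_*(\epsilon)\in\mathcal O$, i.e.\ $\eta_*(\epsilon)\notin\mathcal o$. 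Then $F(\bar\beta)\ge\eta_*(\epsilon)>0$ forces $F(\bar\beta)\notin\mathcal o$, contradicting $F(\bar\beta)\in\mathcal o$.

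Thus the formula does have a real solution, which completes the reflection and hence establishes $\R_{\mathcal C^\infty}\preceq_1\widehat{\mathcal O}$. The conceptual crux is that $\mathcal O$ must be closed under \emph{unrestricted} $C^\infty$ functions, not merely under the bounded restricted symbols of $\mathcal L_{\mathcal C^\infty}$: this is precisely what makes $\eta_*(\epsilon)$ non-infinitesimal and defeats the flat-function counterexamples underlying the Marker example (Example~\ref{lem:dave}); the smoothing and majorization steps ($\widetilde K$, $\eta_*$, $\Psi$) are where $C^\infty$-ness, rather than mere continuity, is needed. I expect the main technical obstacle to be the cutoff bookkeeping in the cube reduction of the second paragraph—tracking, for each occurrence of a restricted symbol in the terms $t_i,s_j$, whether its argument's residue lies in $I(\widehat{\mathcal O})$, on its boundary, or outside, and checking that the continuity of the restricted functions on a neighborhood of the compact cube makes the compactness argument applicable uniformly across these finitely many cases.
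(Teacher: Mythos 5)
Your choice of $\mathcal O$ is fine, and in fact coincides with the paper's: the paper takes the convex hull of the smallest super real closed subring of ${}^*\R$ containing $\xi$, which is $\{x : \abs{x}\le c(\xi)\ \text{for some continuous}\ c\colon\R\to\R\}$, and since every continuous function on $\R$ has a $C^\infty$ majorant this is exactly your ring; your use of $\mathfrak c^+$-saturation to get $\mathcal O\subsetneq{}^*\R$ is also the same as the paper's. Where you diverge is the proof of $\R_{\mathcal C^\infty}\preceq_1\widehat{\mathcal O}$: the paper imports this from Tressl's theory (the maximal ideal $\mathcal o$ is $\Upsilon$-radical, so $\widehat{\mathcal O}$ carries a super real closed ring structure compatible with the residue map; $\R$ is existentially closed in every super real closed integral domain; and each restricted $C^\infty$-function is quantifier-free definable in the $\mathcal L_{\operatorname{src}}$-structures $\R$ and $\widehat{\mathcal O}$ in a compatible way), whereas you attempt a direct proof.

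The direct proof has a genuine gap. Your contradiction hypothesis---``the formula fails in $\R$, i.e.\ no point of $[-1,1]^n$ satisfies $F=0\wedge\bigwedge_j s_j>0$''---silently restricts the existential quantifiers to the unit cube. They range over all of $\widehat{\mathcal O}$ (resp.\ all of $\R$): your cube reduction constrains only the \emph{arguments of restricted symbols}, while the witness $\widehat{\bar\beta}$ itself may have coordinates infinite over $\R$, coupled to the restricted symbols through unbounded polynomial subterms. For such a witness you cannot instantiate $\bar y:=\bar\beta$ in the transferred universal statement (which is quantified only over ${}^*[-1,1]^n$), and compactness of the cube---the sole source of the positivity $m(\epsilon')>0$ that drives the contradiction---is unavailable over $\R^n$, where the infimum can vanish without any real solution. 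This unbounded-witness case is precisely where the content of Tressl's existential-closedness theorem lies, so what you prove is strictly weaker than $\preceq_1$. The gap can be repaired, but it requires an idea absent from your write-up: flatten each term by introducing new existential variables for the values and argument tuples of all restricted-symbol occurrences, note that these new variables are bounded, and then use model completeness of the theory of real closed fields (applicable since $\widehat{\mathcal O}$ is real closed) to replace the existential quantification over the original unbounded variables by a quantifier-free semialgebraic condition on the bounded ones; only after this projection does your compactness argument engage. Two smaller points: the parenthetical claim that an out-of-cube argument ``produces a real solution immediately'' is wrong---such an occurrence must be replaced by $0$ \emph{and} the strict constraint that the argument lies outside the cube must be carried along in the system to be reflected, exactly like your conditions $s_j>0$; and $\Psi(u)=1/\eta_*(1/u)$ extends smoothly only after arranging $\eta_*$ suitably (e.g.\ constant near $+\infty$), or one can instead work with a smooth majorant, which suffices since membership in $\mathcal O$ is a one-sided condition.
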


\noindent
This follows from results in \cite{Tressl}; we recall some definitions and basic results from that paper.

\medskip
\noindent
Let $\mathcal L_{\operatorname{src}}$ be the expansion of the language of rings by an $n$-ary function
symbol~$f$ for every continuous function $f\colon\R^n\to\R$ (for varying $n$).
A {\bf super real closed ring} is an $\mathcal L_{\operatorname{src}}$-structure which expands a commutative ring with $1$, with the interpretations of $+$, $\cdot$ and $0$, $1$ compatible with the interpretations of the functions symbols associated to the corresponding continuous functions $\R^2\to\R$ and $\R^0\to\R$, respectively,
and satisfying the $\mathcal L_{\operatorname{src}}$-sentences
$$\forall x(\id(x)=x) \qquad\text{(where $\id\colon\R\to\R$ is the identity function)}$$ 
and
$$\forall x_1 \cdots \forall x_n \bigg( f\big(g_1(x_1),\dots,g_n(x_n)\big) = \big(f\circ (g_1,\dots,g_n)\big)(x_1,\dots,x_n) \bigg),$$
for all continuous functions $f\colon\R^n\to\R$ and $g_i\colon \R^{m_i}\to\R$ ($i=1,\dots,n$).  The class of super real closed rings is a variety (in the sense of universal algebra). 

Clearly the field $\R$ can be expanded to an $\mathcal L_{\operatorname{src}}$-structure in a natural way.
Let~$A$ be a super real closed ring. Then $A$ can be viewed as an extension of  $\R$ in a unique way (identifying each $r\in\R$ with the interpretation of the constant function $r$ in $A$), and if $A$ is an integral domain, then $\R$ is existentially closed in $A$ \cite[Corollary~5.6]{Tressl}.
A prime ideal $P$ of $A$ is said to be {\bf $\Upsilon$-radical} if it is the kernel of a morphism $A\to B$ of
$\mathcal L_{\operatorname{src}}$-structures, for some super real closed ring~$B$.
(See \cite[Sections~3 and 6]{Tressl} for an explanation of this terminology.)
In this case, there is a unique expansion of the integral domain $A/P$ to a super real closed ring such that the residue map $A\to A/P$ is a morphism of $\mathcal L_{\operatorname{src}}$-structures. Every maximal ideal of $A$ is $\Upsilon$-radical \cite[Theorem~6.14]{Tressl}. Given a subset $C$ of $A$, there exists a smallest super real closed subring of $A$ containing~$C$;  the cardinality of this super real closed subring of $A$ equals $\abs{C}+\mathfrak c$.
The convex hull of a super real closed subring of $A$ is super real closed \cite[Corollary~9.2,~(i)]{Tressl}.

\medskip
\noindent
We can now give:

\begin{proof}[Proof of Lemma~\ref{lem:tressl}]
We view ${}^*\R$ as a super real closed ring in the natural way.
Let $\xi$ be a positive infinite element of ${}^*\R$ and let $\mathcal O$ be the convex hull of the
smallest super real closed subring of ${}^*\R$ containing $\xi$.
Then  $\R_{\operatorname{fin}} \subsetneq \mathcal O$ (since $\xi\in\mathcal O$) and
$\mathcal O \subsetneq {}^*\R$ (by $\mathfrak c^+$-saturation of ${}^*\R$).
The maximal ideal $\mathcal o$ of $\mathcal O$ is $\Upsilon$-radical; extend the residue field $\widehat{\mathcal O}=\mathcal O/\mathcal o$ to a super real closed ring such that the residue map
$\mathcal O\to\widehat{\mathcal O}$ is a morphism of $\mathcal L_{\operatorname{src}}$-structures.
Then $\R$, viewed as  super real closed ring, is existentially closed in $\widehat{\mathcal O}$. 
Note that the ordering of $\R$ is quantifier-free definable in the  super real closed ring $\R$, as
$x\geq 0 \Longleftrightarrow \abs{x}=x$ for each $x\in\R$.
Every restricted $C^\infty$-function $f\colon \R^n\to\R$ is quantifier-free definable in the $\mathcal L_{\operatorname{src}}$-structure $\R$ (since $f\upharpoonright I^n$ has an extension to a continuous function
$\R^n\to\R$, by Tietze Extension); moreover, in the  super real closed ring $\widehat{\mathcal O}$, the same quantifier-free formula defines the function
$f^{\widehat{\mathcal O}}$ introduced above.
All of this now easily implies that $\R_{\mathcal C^\infty}\preceq_1\widehat{\mathcal O}$.
\end{proof}

\noindent
It might be interesting to isolate a tameness property of $\R_{\mathcal C}$,  weaker than o-minimality, which
guarantees $\R_{\mathcal C}\preceq\widehat{\mathcal O}$ (in the language $\mathcal L_{\mathcal C}$) for {\it every}\/ convex subring $\mathcal O$ of ${}^*\R$.

\bibliographystyle{amsplain}

\end{document}